\theoremstyle{plain}
\newtheorem{thm}{Theorem}
\newtheorem{lem}[thm]{Lemma}
\newtheorem{claim}{Claim}
\theoremstyle{definition}
\theoremstyle{remark}
\newtheorem{remark}[thm]{Remark}
\newtheorem{question}{Question}
\title[Generalized Alexander's Theorem]{Piecewise Linear Generalized Alexander's \\ Theorem in Dimension at most 5}
\author{Sudipta Kolay}
\address{School of Mathematics \\ Georgia Institute of Technology\\Atlanta, GA 30332, USA}
\email{skolay3@math.gatech.edu}
\date{}
\begin{document}
\begin{abstract} We study piecewise linear co-dimension two embeddings of closed oriented manifolds in Euclidean space, and show that any such embedding can
always be isotoped to be a closed braid as long as the ambient dimension is at most five, extending results of Alexander (in ambient dimension three), and
Viro and independently Kamada (in ambient dimension four). We also show an analogous result for higher co-dimension embeddings.
\end{abstract}

\maketitle

%%%%%%%%%%%%%%%%%%%%%%%%%%%%%%%%%%%%%%%%%%%%%%%%%%%%%%%%%%%%%%%%%%%%%%%%%%%%%%%%%%%%%%%%%%%%%%%%%%%%%%%%%%%%%%%%%%%%%%%%%%%%%%%%%%%%%%%%%%%%%%%%%%%%%%%%%%
%%%%%%%%%%%%%%%%%%%%%%%%%%%%%%%%%%%%%%%%%%%%%%%%%%%%%%%%%%%%%%%%%%%%%%%%%%%%%%%%%%%%%%%%%%%%%%%%%%%%%%%%%%%%%%%%%%%%%%%%%%%%%%%%%%%%%%%%%%%%%%%%%%%%%%%%%%

\section{Introduction}
 A classical theorem of Alexander \cite{A} says that every oriented link in $\mathbb{R}^3$  is isotopic to a closed braid. 
This theorem has been used to study knot theory, for example the Jones Polynomial \cite{J} is a knot invariant defined using braids.
In this paper we study generalizations of Alexander's theorem in higher ambient dimension.

Braided surfaces were first introduced by Rudolph \cite{R} for surfaces with boundary, but the notion we will be using is due to Viro. Viro defined the 
notion of closed braid for closed oriented surface in $\mathbb{R}^4$, which can be thought of as closure of certain (the ones with trivial boundary) braided 
surfaces in the sense of Rudolph. Hilden, Lozano and Montesinos \cite{HLM}, using different terminology,
first studied braided embeddings to prove that each three manifold has a braided embedding in $\mathbb{R}^5$. The notion of braided embedding was defined in general by Etnyre and Furukawa \cite{EF}, and they have been studied previously 
by Carter and Kamada \cite{CK}.
 
 The first analogue of Alexander's theorem for surfaces is due to Rudolph \cite{R}, who showed that every oriented ribbon surface is smoothly isotopic to
 a closed braid. Alexander's theorem was generalized to closed oriented surfaces in $\mathbb{R}^4$ by Viro and independently by Kamada. 
Viro announced his results in a lecture in 1990, but his proof was never published. Kamada gave an alternative proof \cite{K1,K2} using the motion picture method
 to describe surfaces in $\mathbb{R}^4$.

The main result of this article is to show that in the piecewise linear category, Alexander's theorem can be generalized to ambient dimension 5.
\begin{thm}[\textbf{P.L. Generalized Alexander's Theorem}]\label{1}
 Any closed oriented piecewise linear $(n-2)$-link in $\mathbb{R}^n$ can be piecewise linearly isotoped to be a closed braid for $3\leq n \leq 5$.
\end{thm}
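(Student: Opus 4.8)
The plan is to fix the standard model for closed braids and then run a piecewise linear version of Alexander's ``throw the strand over the axis'' argument, the whole scheme being viable exactly because $\dim M=n-2\le 3$. First I would take the braid axis $A\subset S^n$ to be an unknotted circle and use the identification $S^n\setminus A\cong S^{n-2}\times D^2$ under which the associated projection $\pi\colon S^n\setminus A\to S^{n-2}$ is the first-factor projection; a closed braid is then an embedded copy of $M$ inside $S^{n-2}\times D^2$ for which $\pi|_M\colon M\to S^{n-2}$ is an orientation-preserving simple branched covering, the points of $M\cap(\{b\}\times D^2)$ serving as the braid strands in the disk fibre. Since $\dim M+\dim A=n-1<n$, a general position isotopy first pushes $L$ off $A$ so that $L\subset S^{n-2}\times D^2$, and after choosing compatible triangulations I may assume $\pi|_L$ is a generic piecewise linear map, non-degenerate and of a definite orientation sign on each top simplex. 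At this point I would record the structural fact that dictates the dimension hypothesis: a closed braid exhibits $M$ as a branched cover of $S^{n-2}$, so the theorem can only hold when every closed oriented $(n-2)$-manifold branch-covers $S^{n-2}$ --- true for curves, for surfaces by classical Riemann-surface theory, and for $3$-manifolds by Hilden and Montesinos (the geometric form of which is the braided embedding of every $3$-manifold in $\mathbb{R}^5$ in \cite{HLM}), and no longer guaranteed once $\dim M\ge 4$.

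With the setup in place, I would isolate the obstruction to being braided as the negative region $L_-\subset L$, the union of top simplices on which $\pi|_L$ reverses orientation, and remove it by the piecewise linear Alexander move. Taking an innermost negative piece $\Delta$ lying over a base ball $B\subset S^{n-2}$, with $\partial\Delta$ already braided over $\partial B$, I would push $\Delta$ radially out to the boundary of the $D^2$-fibres, across the complementary region containing $A$, and back in, so that it re-traverses $B$ with the opposite local orientation; this is a piecewise linear isotopy replacing $\Delta$ by a positively oriented sheet $\Delta'$ with the same boundary, modelled on Alexander's replacement of a clockwise arc by a counterclockwise one. Iterating over the pieces of $L_-$ with a complexity that each move strictly decreases and that no move increases --- for instance the number of negative simplices, or a discretized negative volume $\int_{L_-}|\pi^{\ast}\mathrm{vol}|$ --- drives $L_-$ to the empty set, whereupon $\pi|_L$ is an orientation-preserving branched covering and $L$ is a closed braid. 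For $n=3$ this reproduces Alexander's theorem and for $n=4$ the theorem of Viro and Kamada.

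The hard part, and where $n\le 5$ is genuinely used, is guaranteeing that each push is realizable as an \emph{ambient} piecewise linear isotopy: the swept region has dimension $\dim\Delta+1=n-1$, and it must be kept embedded and disjoint from $L\setminus\Delta$, which a naive transversality count does not deliver. I would instead perform the move sheet-by-sheet and clear the finitely many resulting intersections by finger and Whitney-type moves routed into the axis region, whose auxiliary disks can be taken embedded precisely when $\dim M\le 3$ and the ambient dimension is at most $5$. Equivalently, completing the braiding reduces to representing the local pieces of $M$ by the branched coverings of Hilden--Montesinos and their lower-dimensional analogues, available only up to $\dim M=3$; for $\dim M\ge 4$ even the abstract branched covering may fail to exist, which is why the argument halts at ambient dimension $5$. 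Establishing this realizability uniformly for $3\le n\le 5$, and compatibly with the fixed isotopy class of the given link $L$, is the central obstacle; granting it, the induction above completes the proof.
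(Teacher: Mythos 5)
Your overall scheme---reduce to making the projection orientation-preserving on every top simplex, then induct on the negative region, removing it by throwing pieces across the axis---is the same skeleton as the paper's proof (there the move is a cellular move along a cone $-(q*\sigma)$ whose projection contains the origin; Lemmas~\ref{A} and~\ref{B}). But both of the points where you locate the difficulty are misidentified, and the step you end by ``granting'' is precisely the content of the theorem. First, the claim that the dimension hypothesis comes from the existence of branched coverings---that the theorem ``can only hold when every closed oriented $(n-2)$-manifold branch-covers $S^{n-2}$,'' which is ``no longer guaranteed once $\dim M\ge 4$''---is false: Alexander's 1920 theorem \cite{A0} shows that every closed oriented piecewise linear $k$-manifold is a piecewise linear branched cover of $S^k$ in \emph{every} dimension (the paper re-derives this in Remark~\ref{r5}), and the paper's notion of closed braid does not require the branched covering to be simple, so Hilden--Montesinos is not where the restriction lives. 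The actual source of $n\le 5$ is a general position constraint on the multiple point data of the projection: the triple point set of a $k$-link projected to $\mathbb{R}^{k+1}$ has dimension $k-2$, the same as the skeleton $\delta M$ of $(k-2)$-faces, so for $k\le 3$ its interaction with $\delta M$ can be controlled (this is exactly what the paper's special cases 5 and 6 for $k=3$ handle, where triple point segments limit onto a vertex or an edge of a simplex), while for $k\ge 4$ triple points generically hit $\delta M$ and the inductive subdivision scheme breaks down.

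Second, and more fatally for your argument, the embeddedness and disjointness you need cannot be obtained the way you propose. The replacement disk (or the track of your push) has dimension $n-1$ and must be made disjoint from the $(n-2)$-dimensional remainder of the link in ambient dimension $n$; generically this intersection has dimension $n-3$, which is a surface when $n=5$ and a curve when $n=4$---not ``finitely many resulting intersections''---and no finger or Whitney-type move removes it. The paper's substitute is specific: the cone point $q$ has a free coordinate in the axis direction $v$, and sliding $q$ far along $v$ makes the cone $q*\sigma$ miss the rest of $M$ \emph{precisely when} $\sigma$ carries only overcrossings or only undercrossings (Lemma~\ref{B}); when both types occur, one must first subdivide $\sigma$ along its double and triple point structure so that each subcell either has crossings of one type or contains a single controlled multiple point, and keeping these subdivisions coherent through the induction (the ``division'' bookkeeping, the good-division hypothesis, and the adjacency cases 5 and 6) is the entire case analysis for $k=2,3$. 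Since your proposal leaves this realizability as an unproved assumption, it is a genuine gap, not a proof.
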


Our approach is similar to Alexander's original proof in \cite{A} (see also \cite[Theorem~2.1]{B}, or \cite[Theorem~4.2]{K2}) in the
classical case. We give an alternate proof of Kamada's generalization of Alexander's Theorem in dimension 4. For completeness, we also
include the proof of the classical case of dimension 3. We also recover another classical result of Alexander \cite{A0}, that says
that any closed oriented piecewise linear $k$-manifold is a piecewise linear branched cover over the sphere $S^k$, see Remark~\ref{r5}.

If Theorem~\ref{1} can be upgraded to the smooth category, then there are applications to contact geometry. Etnyre and Furukawa (see \cite[Theorem~1.27]{EF}) showed  that if Alexander's Theorem holds
in the smooth category (with the branch locus being a submanifold) in ambient dimension five, then any embedding of a closed oriented 3-manifold in $S^5$ can be isotoped to be a transverse contact embedding.

 One may wonder if there are some analogues of Alexander's theorem for higher co-dimension link. More precisely,

\begin{question}\label{q1}
 Given an natural number $k$ is there an natural number $n\geq k+2$ so that any closed oriented  $k$-manifold embeds in $\mathbb{R}^{n}$, and moreover any embedding is isotopic
 to a closed braid?
\end{question}

It is well known that the embedding problem holds as long as $n\geq 2k$, see \cite[Theorem~5.5]{RS} for piecewise linear category, and \cite{W} for smooth category. By Theorem~\ref{2} below, in the piecewise linear
 category we have for $k\geq 2$ and $n\geq 2k$, any embedding is isotopic to a closed braid, so the answer to Question~\ref{q1} is affirmative.
Moreover, we can ask given any  $k$-link in $\mathbb{R}^n$, is it always isotopic to a closed braid? The following result gives a partial answer
to that question.

\begin{thm}\label{2}
 Any closed oriented piecewise linear $k$-link in $\mathbb{R}^n$ can be piecewise linearly isotoped to be a closed braid for $2n\geq 3k+2$.
\end{thm}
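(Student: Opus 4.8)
The plan is to adapt Alexander's original argument to the high-codimension, metastable setting. First I would fix the standard open book of $\mathbb{R}^n$ (or $S^n$): writing $\mathbb{R}^n=\mathbb{R}^2\times\mathbb{R}^{n-2}$, the binding is the codimension-two flat $A=\{0\}\times\mathbb{R}^{n-2}$, the projection $q\colon\mathbb{R}^n\to\mathbb{R}^2$ has $A=q^{-1}(0)$, and the pages $P_\theta$ are the half-hyperplanes on which the angular map $p=q/|q|\colon\mathbb{R}^n\setminus A\to S^1$ is constant. In this language a closed braid is exactly an embedded $M$ that is disjoint from $A$ and transverse to every page, i.e. one for which $p|_M\colon M\to S^1$ is a PL submersion (equivalently, by compactness, a fibration). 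I would begin by translating $M$ off $A$ in the $\mathbb{R}^2$-direction and putting it in general position, then choosing a fine triangulation of $M$ on which $q$ is simplexwise linear; since $M\cap A=\emptyset$, the image $q(\sigma)$ of every simplex is a convex cell avoiding the origin.

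Next comes the coning (Alexander) move. Call a top-dimensional simplex $\sigma$ of $M$ \emph{good} if $q|_\sigma$ winds positively around the origin, in the sense that $p|_\sigma$ is monotone, and \emph{bad} otherwise; then $M$ is a braid precisely when every simplex is good. For a bad simplex I would choose a point $c$ whose $q$-image lies on the far side of the binding and replace $\sigma$ by the cone $c*\partial\sigma$. This is an elementary PL isotopy whose trace is the $(k+1)$-simplex $c*\sigma$, and afterward the new facet-cones $c*(\text{facets of }\sigma)$ run the ``long way'' around $A$, converting $\sigma$ into good cells. Performing such moves simplex by simplex, while arranging that a suitable complexity (e.g. the number of bad simplices weighted by their angular defect) strictly decreases, would terminate with a closed braid.

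The crux is keeping $M$ embedded throughout, and this is exactly where the hypothesis $2n\ge 3k+2$ must enter. The move is a genuine ambient isotopy only if the swept cone $c*\sigma$ meets $M$ along $\sigma$ alone, and the new $k$-cell $c*\partial\sigma$ is disjoint from the rest of $M$. A direct general-position count of the $(k+1)$-dimensional cone against the $k$-dimensional $M$ lets one pick $c$ cleanly only when $n\ge 2k+1$; for $k\ge 2$ the metastable range is strictly weaker than this, so the naive choice of $c$ will in general leave the cone meeting $M$ in a set of dimension $2k+1-n\ge 0$. I expect the main work — and the real reason the bound is $2n\ge 3k+2$ — to be removing these excess intersections: they form a low-dimensional subset, and in the metastable range one can guide the push past the offending sheets of $M$ by a Whitney-type general-position argument, piping the interior intersections off along embedded disks, which is available precisely when $2k+1-n$ is small enough, i.e. when $2n\ge 3k+2$. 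As a consistency check, for $k=1$ this reduces to $n\ge 3$ and reproduces the classical picture of routing a bad arc over the axis, while in codimension two it forces $k\le 2$, matching the fact that the $n=5$ case genuinely lies outside this range and requires the separate argument of Theorem~\ref{1}.
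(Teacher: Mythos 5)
Your proposal breaks down at the very first step: the definition of ``closed braid.'' You take the braid axis to be a codimension-two flat $A=\{0\}\times\mathbb{R}^{n-2}$ and call $M$ a closed braid when the angular map $p|_M\colon M\to S^1$ is a submersion (equivalently a fibration over $S^1$). That is the right picture only for $k=1$. In this paper, for a $k$-link of codimension $l=n-k$, the axis is an $(l-1)$-dimensional subspace $\ell$, the projection is $\pi\colon\mathbb{R}^{k+l}\to\mathbb{R}^{k+1}=\ell^{\perp}$, and $M$ is a closed braid when the composition $M\to\mathbb{R}^{k+1}\setminus O\to S^k$ is an oriented \emph{branched covering of $S^k$} (Theorem~\ref{3}); the ``pages'' are indexed by $S^k$, not $S^1$. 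The distinction is not cosmetic: with your definition the theorem is false, because a closed $M$ transverse to every page of an $S^1$-open book would fiber over $S^1$, forcing $\chi(M)=0$ --- impossible for, say, $M=S^2\subset\mathbb{R}^4$, a case allowed by $2n\geq 3k+2$. So the object you propose to produce by isotopy does not exist in general, and no amount of general position can repair that.

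Second, even with the target notion corrected, the mechanism you offer for keeping the coning move embedded --- a Whitney-type ``piping'' of excess intersections, which you conjecture is what the bound $2n\geq 3k+2$ is for --- is not the role the hypothesis actually plays, and you give no construction of it. Writing $n=k+l$, the hypothesis is equivalent to $2l\geq k+2$, and what it buys in the paper's proof is a statement about the projection $\pi_v$ along a direction $v\in\ell$: pairwise intersections of projected simplices have dimension $k-l+1$, so triple-point sets have dimension $k-2(k-l+1)=k-2l+2\leq 0$, i.e.\@ triple points are isolated and quadruple points absent. One then subdivides a negative simplex so that each subcell either contains a single interior triple point or has crossings of one type only (all overcrossings or all undercrossings); for such a cell, Lemma~\ref{B} produces the cone apex $q$ not generically but by pushing its $v$-coordinate far up or down, which forces the cone $q*\nu$ to meet $M$ exactly in $\nu$. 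Embeddedness of the cellular move is thus an explicit consequence of the one-sided crossing structure, and the induction is on the number of negative simplices of a division, with Theorem~\ref{3} converting ``all simplices positive'' into the braiding statement. Your proposal is missing both this reduction to positivity and the actual device that makes the cone disjoint from the rest of the link.
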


\textit{Organization}. The paper is organized as follows: in the Section 2 we define closed braids and positive links, and we show that the notions of closed braid and positive link 
are equivalent, thereby reducing the braiding problem to isotoping a link to be positive. In the Section 3, we describe cellular moves, which will be used to replace a negative
simplex with some positive simplices. In Sections 4 and 5, we study co-dimension two and higher co-dimension embeddings repectively.
We will show that under the given hypotheses of Theorems~\ref{1} and ~\ref{2}, any closed link can be isotoped to be positive, completing the proofs.
At the end of either section, we ask some questions about other possible generalizations of Alexander's Theorem.

\textit{Acknowledgements}. The author is grateful to John Etnyre for introducing the problem and many useful discussions. The author would like
to thank James Conway for making helpful comments on earlier drafts of this paper.

%%%%%%%%%%%%%%%%%%%%%%%%%%%%%%%%%%%%%%%%%%%%%%%%%%%%%%%%%%%%%%%%%%%%%%%%%%%%%%%%%%%%%%%%%%%%%%%%%%%%%%%%%%%%%%%%%%%%%%%%%%%%%%%%%%%%%%%%%%%%%%%%%%%%%%%%%%
%%%%%%%%%%%%%%%%%%%%%%%%%%%%%%%%%%%%%%%%%%%%%%%%%%%%%%%%%%%%%%%%%%%%%%%%%%%%%%%%%%%%%%%%%%%%%%%%%%%%%%%%%%%%%%%%%%%%%%%%%%%%%%%%%%%%%%%%%%%%%%%%%%%%%%%%%%
\section{Closed Braids and Positive Links}

 We assume that all spaces are piecewise linear, all embeddings are piecewise linear and locally flat, all isotopies are piecewise linear and ambient,
 and all other maps (radial projections, coverings and branched coverings) are topological\footnote{Radial projections need not be piecewise linear, see Chapter 1 in \cite{RS}.}.
 By linear we will mean linear in the affine sense.
 
 Let $k$ and $l$ be natural numbers with $l\geq 2$. Let $f:M^k\rightarrow\mathbb{R}^{k+l}$ be an embedding of a closed oriented $k$-manifold (possibly disconnected), and we call the image a (co-dimension $l$) $k$-link. We will be mostly concerned with co-dimension two embeddings, i.e.\@ $l=2$.
 
 We say that $f$ is a \textit{co-dimension $l$ braided embedding} if $f(M)$ is contained in a regular neighborhood $N(S^k)=S^k\times D^l$ of the standard sphere
(unit sphere in $\mathbb{R}^{k+1}\subset\mathbb{R}^{k+l}$) such that the embedding composed with the projection to the sphere, $pr_1\circ f:M\rightarrow S^k$ is an oriented branched 
covering map. Note that in case $k=1$, we have $pr_1\circ f$ is just an oriented covering map since the branch locus is empty, also if further $l=2$, then $f(M)$ is 
a closed braid (in the classical sense).  We generalize this notion and call the image $f(M)$ of a co-dimension $l$ braided embedding $f$ to be a \textit{co-dimension $l$ closed braid}. We will
just say $f$ is a braided embedding and $f(M)$ is a closed braid if co-dimension is clear from the context. By \textit{braiding} we will mean isotoping a link 
to be a closed braid. We will identify $M$ with $f(M)$, and think of $f$ as an inclusion. A simplex of $M$ is understood to be in $\mathbb{R}^{k+l}$.

Let us choose (and fix) a $l-1$ dimensional subspace $\ell$ of $\mathbb{R}^{k+l}$, which will play the role of the braiding axis. Let $\pi:\mathbb{R}^{k+l}\rightarrow\mathbb{R}^{k+1}$ 
denote orthogonal projection to $\ell^\bot$, and let $O$ denote the origin of $\mathbb{R}^{k+1}$.

 We say that a $k$-simplex $\sigma=[p_0,...,p_k]$ in $\mathbb{R}^{k+l}$ is in \textit{general position} with respect to $\ell$ if
any of the following equivalent conditions hold:
\begin{enumerate}
 \item There is no hyperplane in $\mathbb{R}^{k+l}$ which contains both $\sigma$ and $\ell$.
 \item There is no hyperplane in $\mathbb{R}^{k+1}$ which contains both $\pi(\sigma)$ and $O$.
 \item The vectors $\pi(p_0),...,\pi(p_k)$ are linearly independent.
 \item The determinant of $[\pi(p_0)|\pi(p_1)|...|\pi(p_k)]$ is nonzero.
\end{enumerate}
 We can always assume each simplex is in general position (with respect to $\ell$), because if not, then by slightly perturbing the vertices, we can put it in general position.
 
\textit{General Position}. We will be needing several general position arguments, and we will outline the proof of one of them.
They all follow the same pattern: the degenerate case happens if and only if a continuous function vanishes. 
So, if the system was non-degenerate, then any slight perturbation does not change that fact, and if the system was degenerate,
it would be possible to make it non-degenerate with a slight perturbation.

 We say that a simplex $[p_0,...,p_k]$ in $\mathbb{R}^{k+l}$ in general position (with respect to $\ell$) is \textit{positive} if the simplex $[O,\pi(p_0),...,\pi(p_k)]$ has the 
standard orientation of $\mathbb{R}^{k+1}$ (i.e.\@ $[\pi(p_0)|\pi(p_1)|...|\pi(p_k)]$ has positive determinant), otherwise we say it is \textit{negative}. We say that
 an embedded link $f:M^k\rightarrow\mathbb{R}^{k+l}$ is a \textit{positive} (with respect to $\ell$) if the image of each simplex is in general position with respect to $\ell$
and positive. Hereafter the axis $\ell$ will be in the background, it will be understood that a simplex is positive/negative/in general position means it is positive/negative/in general position
with respect to $\ell$.

Let $p:\mathbb{R}^{k+1}\setminus O\rightarrow S^k$ be the radial projection. For any piecewise linear manifold $M^k$ with a given cellular decomposition, let $\delta M$ denote the union of all $(k-2)$-faces of cells of $M$.

The following theorem shows that to prove Theorem~\ref{1}, it suffices to show we can isotope any link to be positive.

\begin{thm}\label{3}
 Let $f:M^k\rightarrow\mathbb{R}^{k+l}\setminus \ell$ be an embedding, then the composition $h$ defined by
$$M^k\xrightarrow{f}\mathbb{R}^{k+l}\setminus \ell\xrightarrow{\pi}\mathbb{R}^{k+1}\setminus O\xrightarrow{p} S^k$$ is an oriented
branched covering map if and only if all simplices of $M$ are positive. In other words, the notions of closed braid and positive link are equivalent.
\end{thm}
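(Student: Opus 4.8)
The plan is to read off the branched-covering structure of $h=p\circ\pi\circ f$ directly from the cell structure of $M$, taking the candidate branch set downstairs to be $h(\delta M)$, which is a codimension-two subcomplex of $S^k$ since $\delta M$ is $(k-2)$-dimensional. Throughout I would use the following consequence of general position: on each $k$-simplex $\sigma$ the projection $\pi$ restricts to a linear embedding, and $p$ restricts to a homeomorphism on the $k$-simplex $\pi(\sigma)$ (whose affine hull misses $O$ by condition (2)), so $h|_\sigma$ is a homeomorphism onto a spherical $k$-cell; by the definition of positive, this homeomorphism preserves orientation exactly when $\sigma$ is positive. (As in the stated conventions, the target is being treated topologically, so it is the covering and local-model conditions that must be verified.)

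For the direction ($\Rightarrow$) I would argue by contraposition. If some $k$-simplex $\sigma$ is negative, then $h|_\sigma$ reverses orientation. The interior of $\sigma$ is open and $k$-dimensional, hence cannot lie in the codimension-two preimage of the branch set, so it contains regular points of $h$; on the regular set an oriented branched covering is an orientation-preserving local homeomorphism, contradicting the fact that $h|_\sigma$ reverses orientation. Thus every simplex of an oriented closed braid must be positive.

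For the converse ($\Leftarrow$) I would build the covering by strata. On the interior of each $k$-simplex, $h$ is an orientation-preserving homeomorphism by the remark above. Across the interior of a $(k-1)$-face $\tau$, exactly two $k$-simplices $\sigma_1,\sigma_2$ meet because $M$ is a closed manifold; as $M$ is oriented they induce opposite orientations on $\tau$, and since $h$ preserves orientation on each, the cells $h(\sigma_1)$ and $h(\sigma_2)$ are forced to opposite sides of $h(\tau)$ and fit together without folding, so $h$ is a local homeomorphism across $\tau$. Hence $h$ is a local homeomorphism on all of $M\setminus\delta M$, and in particular on $M\setminus h^{-1}(B)$ with $B=h(\delta M)$. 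Being proper (as $M$ is compact) and a local homeomorphism into the connected manifold $S^k\setminus B$ (connected for $k\geq 2$; and when $k=1$ one has $B=\emptyset$, so $h$ is immediately a covering of $S^1$), its image is open and closed, whence $h\colon M\setminus h^{-1}(B)\to S^k\setminus B$ is a genuine covering map.

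The step I expect to be the crux is the local model along $B$. Around a $(k-2)$-face $\rho$ the link of $\rho$ in $M$ is a circle, subdivided into arcs by the $k$-simplices containing $\rho$, and $h$ carries this circle onto the normal circle of $h(\rho)$ in $S^k$. The key point is that positivity forces this circle map to be \emph{monotone}: consecutive simplices are glued along $(k-1)$-faces and, by the previous step, always continue to the opposite side, so traversing the link sweeps the images once around $h(\rho)$ in a single direction without reversing, producing a positive-degree map conjugate to $z\mapsto z^{d}$, which is exactly the standard branched-cover model; faces of dimension below $k-2$ sweep into $B$ and impose no further condition. Finally, since $h$ is orientation-preserving on every top-dimensional simplex, the resulting branched covering is oriented, completing the equivalence.
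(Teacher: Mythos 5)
Your proposal is correct, and its skeleton is the same as the paper's: both directions rest on stratifying $M\setminus h^{-1}(h(\delta M))$ into interiors of $k$-simplices and interiors of shared $(k-1)$-faces, proving local injectivity of $h$ there, and then using compactness/properness to upgrade the resulting local homeomorphism to a covering of $S^k\setminus h(\delta M)$ (the paper phrases this via compact discrete fibers, you via proper local homeomorphism plus open-and-closed image; both work). The one substantive difference is the crucial step at a shared face $\tau=\sigma_1\cap\sigma_2$: the paper proves injectivity of $h|_{\sigma_1\cup\sigma_2}$ by an explicit determinant computation (a ray from $O$ meeting both $\pi(\sigma_1)$ and $\pi(\sigma_2\setminus\tau)$ would force the two positivity determinants to have opposite signs), whereas you argue softly that orientation-preservation on each simplex together with the opposite induced orientations on $\tau$ forces $h(\sigma_1)$ and $h(\sigma_2)$ to opposite local sides of $h(\tau)$. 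That argument is sound, but to close it you should record the general-position fact that makes ``sides'' meaningful: the linear span $H$ of $\pi(\tau)$ meets $\pi(\sigma_i)$ exactly in $\pi(\tau)$ (by linear independence of the vertex projections), so $h(\sigma_i)$ meets the great hypersphere $p(H)$ only in $h(\tau)$ and hence lies locally on a single side -- this is precisely the content the paper's determinant identity encodes. Finally, your last paragraph on the monotone circle map $z\mapsto z^{d}$ along the $(k-2)$-strata is work the paper never does and does not need: its (implicit) notion of oriented branched covering only requires that $h$ restrict to an oriented covering away from the codimension-two set $h(\delta M)$, which your earlier paragraphs already establish. What that extra analysis buys is a genuine local model at generic points of the branch locus, i.e.\@ a stronger conclusion; but, as you half-acknowledge, it is not the crux, and if one insisted on local models at all strata the lower-dimensional faces would remain to be treated.
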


\begin{proof} If $M$ is a closed braid, then the restriction of $h$ to any particular simplex $\sigma$ must be orientation preserving, and it follows
that all simplices of $M$ must be positive. 

Let us now assume that $M$ is a positive link. Let $\Sigma:=h(\delta M)$. %(i.e. the image under $h$ of the $k-2$ faces of $M$.)
 We will show that $h$ restricts to a covering map on $M\setminus\ h^{-1}(\Sigma)$. Now any point $x$ of 
$M\setminus\ h^{-1}(\Sigma)$ could either be an interior point of a $k$-simplex, or on the interior of a $(k-1)$-face shared by two $k$-simplices.
We will show that in both these cases, we can find a compact neighbourhood $N$ of $x$ such that $h|_N$ is injective.

  Let $x$ be in the interior of the $k$-simplex $\sigma=[p_0,...,p_k]$. Then for any $y$ in $\sigma$ we see that 
 the ray passing through $O$ and $\pi(y)$ meets $\pi(\sigma)$ exactly once, since $\pi(p_0),...,\pi(p_k)$ form a basis for $\mathbb{R}^{k+1}$.
 Thus in this case $h|_\sigma$ is injective.
 
  Let us now suppose that $x$ is in the interior of the intersection of the adjacent simplices $\sigma=[p_0,...,p_k]$ and $\tau=[p_1,q_0,p_2,...,p_k]$. 
 For $\sigma$ and $\tau$ to be compatible, the induced orientation on the $(k-1)$-face $\nu=[p_1,...,p_k]$ they share must be opposite, i.e.\@
 the determinant of the matrix $[\pi(p_0)|\pi(p_1)|...|\pi(p_k)]$ is positive, and the determinant of the matrix 
 $[\pi(q_0)|\pi(p_1)|...|\pi(p_k)]$ is negative. Suppose $y$ is in $\sigma$ and the ray passing through $O$ and $\pi(y)$ meets 
 $\pi(\tau\setminus\nu)$, then we see that we see that for some non-negative scalars $c_0,...,c_k, d_1,...,d_k$ and positive scalars $d_0,\lambda$ we have
 $$ c_0\pi(p_0)+c_1\pi(p_1)+...+c_k\pi(q_k)=\lambda(d_0\pi(q_0)+d_1\pi(p_1)+...+d_k\pi(q_k))$$ and so we have
 $c_0\pi(p_0)-\lambda d_0\pi(q_0)\in\text{ Span}\{\pi(p_1),...,\pi(p_k)\}$, and hence
    $$c_0\det[\pi(p_0)|\pi(p_1)|...|\pi(p_k)]=\lambda d_0\det[\pi(q_0)|\pi(p_1)|...|\pi(p_k)]$$
  which is a contradiction to our assumption that both $\sigma$ and $\tau$ are positive. Thus in this case $h|_{\sigma\cup\tau}$ is injective.
 
  Thus in either case, for a compact neighborhood $N$ of $x$, $h|_N$ is a continuous bijection between compact Hausdorff spaces and hence a homeomorphism onto its image.
  %It continues to remain a homeomorphism after we remove $h^{-1}(\Sigma)$ and $\Sigma$.
  Thus $h|_{M\setminus h^{-1}(\Sigma)}:M\setminus h^{-1}(\Sigma)\rightarrow S^k\setminus\Sigma$
  is a local homeomorphism, and infact a covering map since for any $y\in S^k\setminus\Sigma$, the fiber $h^{-1}(y)$ is compact and discrete.
  %and $S^k\setminus\Sigma$ is connected.
  Also we can check that $h$ is orientation preserving. Thus $h$ is an oriented branched covering, as required.  
\end{proof}

\begin{remark}\label{r1} The map $h$ above is only continuous since the radial projection $p$ is so. However, we can compose $\pi\circ f$ with
the pseudo-radial projection\footnote{Pseudo-radial projection is the linear extension of the restriction of the radial projection to the vertices of the
domain, see Chapter 2 in \cite{RS}.} instead of the radial projection $p$, and then the resulting composition will be a piecewise linear branched cover.
\end{remark}

Let us choose (and fix) a unit vector $v\in\ell\subset\mathbb{R}^{k+l}$, let $\ell_v$ denote the line $\mathbb{R}v$ ,
and let $\pi_v:\mathbb{R}^{k+l}\rightarrow\mathbb{R}^{k+l-1}$ denote orthogonal projection to $\ell_v^\bot$.
By $v$-coordinate of a point $p\in\mathbb{R}^{k+l}$ we will mean the scalar projection of $p$ onto $v$. We say that a point $p$ on a $k$-simplex $\sigma$ of $M^k$ in 
$\mathbb{R}^{k+l}$ is an \textit{overcrossing} (respectively \textit{undercrossing}) if there is another point $q\in M$ 
 with $\pi(p)=\pi(q)$ and difference of $v$-coordinate of $p$ and the $v$-coordinate of $q$ is positive (respectively negative).

%%%%%%%%%%%%%%%%%%%%%%%%%%%%%%%%%%%%%%%%%%%%%%%%%%%%%%%%%%%%%%%%%%%%%%%%%%%%%%%%%%%%%%%%%%%%%%%%%%%%%%%%%%%%%%%%%%%%%%%%%%%%%%%%%%%%%%%%%%%%%%%%%%%%%%%%%%
%%%%%%%%%%%%%%%%%%%%%%%%%%%%%%%%%%%%%%%%%%%%%%%%%%%%%%%%%%%%%%%%%%%%%%%%%%%%%%%%%%%%%%%%%%%%%%%%%%%%%%%%%%%%%%%%%%%%%%%%%%%%%%%%%%%%%%%%%%%%%%%%%%%%%%%%%%
\section{Cellular moves}
 In this section we describe cellular moves, which we will use repeatedly in the next section to isotope any link to be positive.
 
 Suppose we have a embedded oriented $(k+1)$-disk $D$ in $\mathbb{R}^{k+l}$ such that $D$ meets $M^k$ in a $k$-disk $\sigma$
 in $\partial D$ which is a union of simplices of both $M$ and $\partial D$ and the induced orientations coming from $M$ and $\partial D$ are opposite. Let $M'$ be the manifold obtained from $M$
 by replacing $\sigma$ with $\overline{\partial D\setminus\sigma}$ (with the orientation on the new simplices coming from $\partial D$), Proposition 4.15 of \cite{RS}
 shows that $M$ and $M'$ are ambient isotopic. We call such replacement a \textit{cellular move} along $D$. Hereafter, we will keep calling the manifold $M$ even after applying
  cellular move.
\begin{remark} \label{r2}
 We want the new manifold to be oriented, and so we need the orientations (induced by $\sigma$) on the
co-dimension one faces of $\sigma$, to agree with the induced orientation coming from the new simplices. This forces the
orientation on the new simplices which is why we require the orientations of the simplices common to $M$ and $\partial D$ to be as above.
\end{remark}

   We will use the cellular moves for constructing all our isotopies, they will be of two types:
  \begin{enumerate}
   \item Moving the vertices of $M$ slightly for general position arguments.
   \item Replacing a negative simplex with a union of positive simplices. 
  \end{enumerate}
  
  For the first type of isotopy, we note that for any vertex $x$ of $M^k$, the union of all $k$-simplices of $M$ which contain $x$ is a $k$-cell,
  and slightly moving $x$ is a cellular move. We note that after moving $x$ slightly, a simplex will remain positive (respectively negative) if it was 
  initially positive (respectively negative). We will say more about the second type of isotopy in Remark ~\ref{r4}, after we make a general observation.
  
  The \textit{join} of two subsets $A$ and $B$ of $\mathbb{R}^n$ is defined to be $$A*B:=\{\lambda a+(1-\lambda) b:a\in A, b\in B, \lambda\in[0,1]\}.$$
\begin{lem}\label{A}
 Let $\sigma=[p_0,...,p_k]$ be a $k$-simplex of $M^k$ in general position in $\mathbb{R}^{k+l}$, and suppose we can find a point
$q\in\mathbb{R}^{k+l}$ such that $D=-(q*\sigma)$ (the minus sign indicates that $D$ is oppositely orientated as compared to $q*\sigma$) meets $M$ only in $\sigma$, and $\pi(\mathring{D})$ contains $O$. Then the result of cellular move along $D$
 is that $\sigma$ is replaced by the other simplices of $\partial D$, and all the new simplices are oppositely oriented compared to $\sigma$.
\end{lem}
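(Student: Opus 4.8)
The plan is to read off the boundary of the $(k+1)$-simplex $q*\sigma$ explicitly, then do two things: verify the hypotheses of the cellular move of Section 3 so that the first assertion becomes automatic, and track orientations through the projection $\pi$ to obtain the second assertion.

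First I would record that $\partial D$ consists of $\sigma=[p_0,\dots,p_k]$ (the facet opposite $q$) together with the $k+1$ facets $\tau_i=[q,p_0,\dots,\widehat{p_i},\dots,p_k]$, $i=0,\dots,k$. The hypothesis that $D$ meets $M$ only in $\sigma$, with $\sigma$ a common subcomplex of $M$ and $\partial D$, supplies most of what the cellular move requires. The one remaining point is the orientation condition of Remark~\ref{r2}: computing $\partial D=-\partial(q*\sigma)$ with the standard boundary formula gives the face $\sigma$ the orientation $-[p_0,\dots,p_k]$, opposite to its orientation in $M$. This is exactly why $D$ is taken to be $-(q*\sigma)$ and not $q*\sigma$. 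The cellular move then replaces $\sigma$ by $\overline{\partial D\setminus\sigma}=\bigcup_i\tau_i$, which is the first assertion.

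The same boundary computation records the orientation each new simplex inherits from $\partial D$: the facet opposite $p_i$ appears as $(-1)^i[q,p_0,\dots,\widehat{p_i},\dots,p_k]$, so $\tau_i$ is positive precisely when $(-1)^i\det[\pi(q)\,|\,\pi(p_0)\,|\,\cdots\,|\,\widehat{\pi(p_i)}\,|\,\cdots\,|\,\pi(p_k)]>0$, whereas $\sigma$ is positive precisely when $\det[\pi(p_0)\,|\,\cdots\,|\,\pi(p_k)]>0$. To compare these signs I would invoke the hypothesis $O\in\pi(\mathring D)$. Since $\pi$ is linear it commutes with the join, so $\pi(D)$ is the $(k+1)$-simplex $[\pi(q),\pi(p_0),\dots,\pi(p_k)]$ and $O$ lies in its interior; hence $O$ has strictly positive barycentric coordinates there, which in particular forces each $\tau_i$ into general position. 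Writing each barycentric coordinate as a ratio of the homogeneous $(k+2)\times(k+2)$ simplex determinants and expanding along the column holding $O=0$, the numerator for the $\pi(q)$-coordinate is exactly $\det[\pi(p_0)|\cdots|\pi(p_k)]$, while the numerator for the $\pi(p_i)$-coordinate is $(-1)^{i+1}\det[\pi(q)|\cdots|\widehat{\pi(p_i)}|\cdots|\pi(p_k)]$. All numerators share the sign of the common denominator, so these two determinants have opposite roles; combined with the $(-1)^i$ from the boundary orientation this makes each $\tau_i$ negative exactly when $\sigma$ is positive, and positive exactly when $\sigma$ is negative.

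The main obstacle I anticipate is precisely this sign bookkeeping: one must line up three separate sign sources — the reversal in $D=-(q*\sigma)$, the $(-1)^i$ in the boundary formula, and the $(-1)^{i+1}$ column permutation needed to move the zero column of $O$ to the front in each barycentric determinant — and check that they combine, uniformly in $i$, so that every new simplex flips positivity relative to $\sigma$. Once these signs are pinned down, both the applicability of the cellular move and the general position of the $\tau_i$ follow formally.
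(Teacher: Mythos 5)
Your proof is correct and takes essentially the same route as the paper: both arguments reduce the lemma to a determinant sign computation on the faces of the cone $q*\sigma$ driven by the hypothesis $O\in\pi(\mathring{D})$ — the paper encodes this as $\pi(q)$ having all-negative coefficients in the basis $\pi(p_0),\dots,\pi(p_k)$, while you encode it as $O$ having strictly positive barycentric (Cramer's rule) numerators, which is the same fact organized slightly differently. Your explicit check of the orientation condition of Remark~\ref{r2} and your observation that the new simplices are automatically in general position are details the paper leaves implicit, but the substance of the argument is identical.
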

 
\begin{proof}
 We see that the orientations of all the $k$-faces of $[q,p_0,p_1,...,p_k]$ agree with the orientations of $\sigma$,
 since when expressed in the basis $\pi(p_0),...,\pi(p_k)$, all coefficients of $\pi(q)$ are negative since $O\in\pi(\mathring{D})$. 
 Thus all the new simplices are oppositely oriented compared to $\sigma$ since the induced orientations on new faces come from $-[q,p_0,p_1,...,p_k]$.
\end{proof}

\begin{remark}\label{r3}
In particular, if $\sigma$ was a negative face to begin with, we can isotope $\sigma$ to a union of positive simplices, provided we can find a $q$ 
as in Lemma ~\ref{A}. 
\end{remark}

\begin{remark}\label{r4}
If we choose $q$ to be any point such that $q*\sigma$ meets $M$ only in $\sigma$, and all the coefficients of $\pi(q)$ in the basis $\pi(p_0),...,\pi(p_k)$ are nonzero, then the result of the cellular
move along $-(q*\sigma)$ will be a $k$-link with each simplex in general position (assuming each $k$-simplex of $M$ was already in general position), and the orientations of the new simplices can be read off from the sign of the corresponding coefficient. In particular, if one chooses $q$ such that all the coefficients
are positive, then the orientations of the new simplices after applying the cellular move would be the same as that of $\sigma$.
\end{remark}

\begin{remark}\label{r5}
 We have obtained an alternate way to look at another classical theorem of Alexander (see \cite{A0}), which states that every closed oriented piecewise linear 
 $k$-manifold is a branched cover over $S^k$. Any such manifold $M$ embeds in $\mathbb{R}^{N}$ for some $N>k$, and as we saw above, 
 for a generic orthogonal projection to $\mathbb{R}^{k+1}$, all the simplices will be non-degenerate. For any negative simplex $\sigma$ of $M$ in $\mathbb{R}^{k+1}$, we can choose a
 point $q\in\mathbb{R}^{k+1}$ such that $q*\sigma$ contains $O$ in its interior. Replacing\footnote{Right now, we are just constructing a new piecewise
 linear map, and not saying that this operation is an isotopy. However if $N$ is sufficiently large, by Theorem~\ref{2} we can carry out the entire construction by an isotopy.} $\sigma$ with the other simplices of $-q*\sigma$ gives us a new piecewise linear map from $M$
 to $\mathbb{R}^{k+1}$, with one fewer negative simplex. Thus by induction on the number of negative simplices, we can always construct a map from $M$
 to $\mathbb{R}^{k+1}$ with all simplices being positive, and by Remark~\ref{r1}, we get a piecewise linear branched cover of $M$ over $S^k$ by composing with the pseudo-radial projection.
 It seems likely that this approach will produce a branched cover with fewer number of sheets than Alexander's original construction.
\end{remark}

 The following lemma shows that it is always possible to find embedded disks to do cellular moves if the crossings are only of one type.

\begin{lem}\label{B}  Suppose $f:M\rightarrow\mathbb{R}^{k+l}$ is a embedded closed oriented link, and let $\sigma$ be a $k$-simplex 
of $M$ in general position in $\mathbb{R}^{k+l}$ and does not have both overcrossings and undercrossings. Then there is a
point $q\in\mathbb{R}^{k+2}$ such that $O\in \pi(\mathring{D})$ and $D\cap M=\sigma$, where $D=-(q*\sigma)$.
\end{lem}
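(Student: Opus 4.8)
The plan is to split the two requirements on $q$ into a condition on its projection $\pi(q)$ and a separate choice of its component along $v$, and then to cash in the one–sided crossing hypothesis by pushing $q$ far in the $v$–direction. First I would record, exactly as in the proof of Lemma~\ref{A}, that the requirement $O\in\pi(\mathring D)$ is equivalent to $\pi(q)$ lying in the open negative cone $C^{-}=\{\sum_{i=0}^{k}b_i\,\pi(p_i):b_i<0\}\subset\mathbb{R}^{k+1}$: expressing $O$ as a strictly positive convex combination of $\pi(q),\pi(p_0),\dots,\pi(p_k)$ forces every coefficient of $\pi(q)$ in the basis $\{\pi(p_i)\}$ to be negative. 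Since $\sigma$ is in general position, $\{\pi(p_i)\}$ is a basis, so $C^{-}$ is a nonempty open cone disjoint from the affine hull of $\pi(\sigma)$; hence any $\pi(q)\in C^{-}$ makes $\Delta:=\pi(q)*\pi(\sigma)$ a non-degenerate $(k+1)$-simplex with $O$ in its interior. As $[q,p_0,\dots,p_k]$ is then a non-degenerate $(k+1)$-simplex whose image under $\pi$ is $\Delta$, the restriction $\pi|_{D}$ is a homeomorphism onto $\Delta$; equivalently $D$ is the graph of an affine map $G\colon\Delta\to\ell$ which agrees with $\sigma$ over $\pi(\sigma)$ and takes the value $q_\ell$ (the $\ell$-component of $q$) at $\pi(q)$.

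The point of the graph description is that $D\cap M$ is governed by comparing $G$ with the sheets of $M$ lying over $\Delta$, and that over $\pi(\sigma)$ the graph $D$ coincides with $\sigma$, so every point of $D$ over $\pi(\sigma)$ already lies in $\sigma$. Thus it suffices to arrange $(u,G(u))\notin M$ for $u\in\Delta\setminus\pi(\sigma)$. Replacing $v$ by $-v$ if necessary, I may assume $\sigma$ has only overcrossings, which says precisely that along $\pi(\sigma)$ the $v$-coordinate of $\sigma$ strictly exceeds that of every other sheet of $M$ present there. I would then take $q$ in the $(k+2)$-dimensional subspace $\ell^{\perp}\oplus\mathbb{R}v$, fixing some $\pi(q)\in C^{-}$ and setting $q_\ell=Vv$ for a large $V>0$, so that the $v$-coordinate $\gamma$ of $G$ satisfies $\gamma(u)=(1-s)V+s\langle g(w),v\rangle$ whenever $u=(1-s)\pi(q)+sw$ with $w\in\pi(\sigma)$ and $s\in[0,1]$, where $g$ denotes the affine map whose graph is $\sigma$.

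Now I would show that, for $V$ large, $\gamma(u)$ strictly exceeds the $v$-coordinate of every sheet of $M$ over every $u\in\Delta\setminus\pi(\sigma)$, which forces $(u,G(u))\notin M$ there and hence $D\cap M=\sigma$. Since $M$ is compact all sheet heights are bounded by some $R$; for $s$ bounded away from $1$ we get $\gamma(u)\geq(1-s)V-R>R$ once $V$ is large, so the tent lies above $M$ away from $\pi(\sigma)$. For $u$ close to $\pi(\sigma)$ the estimate must instead be local: near the interior of $\pi(\sigma)$ only strictly lower crossing sheets occur, so strict overcrossing together with continuity and compactness keeps $\gamma$ above them on a neighbourhood; and for a simplex $\tau$ adjacent to $\sigma$ along a face $\nu$, writing $\eta_\tau$ for the affine $v$-height of $\tau$, one has $\eta_\tau=\langle g,v\rangle$ on $\pi(\nu)$, so along the segment from $w\in\pi(\nu)$ to $\pi(q)$ the difference is $\gamma-\eta_\tau=(1-s)\bigl(V-\eta_\tau(\pi(q))\bigr)$, which is positive off $\pi(\sigma)$ as soon as $V$ exceeds the bounded quantity $\eta_\tau(\pi(q))$.

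I expect the main obstacle to be exactly this local analysis near the boundary $\partial(\pi(\sigma))$, where an adjacent simplex shares a face with $\sigma$ and the strict overcrossing inequality degenerates to equality. The graph structure of $D$ is what rescues the argument: coincidences over $\pi(\sigma)$ automatically lie in $\sigma$, while over $\Delta\setminus\pi(\sigma)$ the steep rise of the tent (controlled by $V$) beats the bounded variation of the finitely many affine sheets of $M$. Making the choice of $V$ uniform over all simplices of $M$ at once — using compactness of $M$ and finiteness of the triangulation, and choosing $\pi(q)\in C^{-}$ generically so that no sheet is tangent to $D$ — is the step demanding the most care; everything else is bookkeeping with affine functions on simplices.
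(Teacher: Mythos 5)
Your setup is sound and, at its core, your strategy is the same as the paper's: choose $\pi(q)$ in the open negative cone so that $O\in\pi(\mathring D)$ (this part is correct and matches Lemma~\ref{A}), note that the $v$-coordinate of $q$ can then be raised freely without changing $\pi(D)$, and use the one-sided crossing hypothesis to cover the region where raising $q$ cannot help. The genuine gap is in your local analysis near $\pi(\sigma)$, precisely at the step you yourself flag as ``demanding the most care'' --- and it is the crux of the lemma, since everything away from $\pi(\sigma)$ is easy. For $\tau$ adjacent to $\sigma$ along a face $\nu$, your identity $\gamma-\eta_\tau=(1-s)\bigl(V-\eta_\tau(\pi(q))\bigr)$ holds only on the cone $\pi(q)*\pi(\nu)$, i.e.\@ for points $u$ whose radial projection $w$ onto $\pi(\sigma)$ lies in $\pi(\nu)$. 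But the set you must control is a full-dimensional neighbourhood of $\pi(\nu)$ inside $\pi(\tau)\cap\Delta$, and for most of its points $w$ lies in $\pi(\sigma)\setminus\pi(\nu)$; there the correct identity is $\gamma(u)-\eta_\tau(u)=(1-s)\bigl(V-\eta_\tau(\pi(q))\bigr)+s\bigl(\eta_\sigma(w)-\eta_\tau(w)\bigr)$, where $\eta_\tau$ must be extended affinely beyond $\pi(\tau)$. The hypothesis says nothing about the sign of the second term (such $w$ are not crossing points, since they lie outside $\pi(\tau)$), and it can be negative. Your appeal to ``continuity and compactness'' cannot close this, because on $\pi(\nu)$ the inequality you want degenerates to equality, so positivity does not propagate to a neighbourhood; one needs a quantitative estimate showing the deficit is $O(1-s)$ with a uniform constant (a Hoffman-type polyhedral bound $\mathrm{dist}(w,\pi(\nu))\leq C\,\mathrm{dist}(w,\pi(\tau))\leq C'(1-s)$), which your write-up does not contain. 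A second, smaller gap: in co-dimension $l\geq 3$ your claim that near the interior of $\pi(\sigma)$ ``only strictly lower crossing sheets occur'' can fail, since two distinct points with the same $\pi$-image may have equal $v$-coordinates (differing in the other $l-2$ directions of $\ell$); such points are neither over- nor undercrossings, so the hypothesis gives no strict inequality there.

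Both holes are exactly what the paper's finiteness mechanism is for, and it is worth seeing how: the paper compares points through $\pi_v$ rather than $\pi$ (so coincidence of $\pi_v$-images plus equal $v$-coordinates forces equality of points, killing the co-dimension $\geq 3$ subtlety), and it observes that each $\pi_v(\tau)\cap\pi_v(D)$ is a compact polytope, hence by Proposition 2.7 of \cite{RS} the convex hull of finitely many extreme points. Since the relevant height-difference is affine on each such polytope, it attains its minimum at an extreme point, so it suffices to impose the strict inequality at the finitely many extreme points not lying over $\pi_v(\sigma)$ --- each giving a finite one-sided bound on the $v$-coordinate of $q$ --- while extreme points over $\pi_v(\sigma)$ are either points of $\sigma$ itself or are handled by the crossing hypothesis. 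Your argument can be repaired by inserting the same step in your picture: $A_\tau=\pi(\tau)\cap\Delta$ is a polytope, $\gamma-\eta_\tau$ is affine on it, it is nonnegative at the extreme points lying in $\pi(\sigma)$ by hypothesis and strictly positive at the remaining extreme points once $V$ is large, and its zero set is then a face of $A_\tau$ contained in $\pi(\sigma)$; without some such convexity argument, the proof as written is incomplete.
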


\begin{proof} Let us assume that all crossings are overcrossings (respectively undercrossings).
 Choose a point $q\in\mathbb{R}^{k+l}$ such that $O\in \pi(\mathring{D})$ and $\pi(q)\notin\pi(M)$. Note that changing only the $v$-coordinate of $q$ does not
 change the projection $\pi_v(D)$, and we will change the $v$-coordinate of $q$ if necessary.
Let $x\in M\setminus\sigma$ be such that there is a point $y_x\in D$ whose image under $\pi_v$ is the same (since $\pi_v|_{D}$ is injective,
for any given $x$, there can be at most one $y_x$) . If we can
ensure that the difference of $v$-coordinate of $x$ and the $v$-coordinate of of $y_x$ is negative (respectively positive), then we would have
$D\cap M=\sigma$. We can in fact reduce to checking this condition for finitely many such points $x$, as follows:
let $\tau$ be a simplex of $M$, then $\pi_v(\tau)\cap \pi_v(D)$ will be a bounded polytope, hence 
by Proposition 2.7 of \cite{RS}, the convex hull of finitely many points. So as long as we ensure that the $v$-coordinates of all points
which map to these extreme points of $\pi_v(\tau)\cap \pi_v(D)$ satisfy the required inequality, we have that $D\cap \tau=\sigma\cap\tau.$ 
Now, if this holds for all simplices $\tau$ of $M$  then we would have $D\cap M=\sigma$ as required. Since $M$ is compact, there 
are finitely many simplices $\tau$, and thus we only have to check the inequality for finitely many points.

 Now given a point $x\in M\setminus\sigma$ with $\pi_v(x)\in \pi_v(D)\setminus\pi_v(\sigma)$\footnote{Note that if $\pi_v(x)\in\pi_v(\sigma)$, 
then we already know if the crossing at $\pi_v(x)$ is an overcrossing or undercrossing, and this is independent
 of the $v$-coordinate of $q$. This is why we require the condition that $\sigma$ does not have both overcrossings and undercrossings in the statement of
 the lemma.}, let $z$ be the unique point in $\sigma$ whose projection
 under $\pi_v$ is the point of intersection of $\pi_v(\sigma)$ and the line passing through $\pi_v(x)$ and $\pi_v(q)$. Then we will have that $x$ is below (respectively above) $D$ as long as 
 $q$ is above (respectively below) the point where the line $\pi_v(q)+\ell_v$ (i.e.\@ the translate of $\ell_v$ which projects to $\pi_v(q)$) meets the line joining $x$ and $z$. Thus we see that each such point $x$ gives rise
 to a lower (respectively upper) bound of $v$-coordinate of $q$, and we can simultaneously satisfy finitely many such bounds. The result follows.
 \end{proof}

\begin{remark}\label{r6}
  Sometimes we will not be able to find a $q$ as in Lemma ~\ref{B}, but we may be able to subdivide $\sigma$ into cells so that the crossings in each subcell
is only of one type and then we have similar results as Lemmas \ref{B} and \ref{A}.
Suppose $f:M\rightarrow\mathbb{R}^{k+2}$ is a embedded closed oriented link, and let $\tau$ be 
a $k$-dimensional cell contained in a negative $k$-simplex $\sigma$ of $M$ in $\mathbb{R}^{k+2}$.
If $\tau$ does not have both overcrossings and undercrossings, then there is a point $q\in\mathbb{R}^{k+2}$ such that $D=-(q*\tau)$
meets $M$ only in $\tau$, and $\pi(\mathring{D})$ contains $O$. Moreover, the result of cellular move along $D$ is that $\tau$ is replaced by a union
of positive simplices.
\end{remark}

%%%%%%%%%%%%%%%%%%%%%%%%%%%%%%%%%%%%%%%%%%%%%%%%%%%%%%%%%%%%%%%%%%%%%%%%%%%%%%%%%%%%%%%%%%%%%%%%%%%%%%%%%%%%%%%%%%%%%%%%%%%%%%%%%%%%%%%%%%%%%%%%%%%%%%%%%%
%%%%%%%%%%%%%%%%%%%%%%%%%%%%%%%%%%%%%%%%%%%%%%%%%%%%%%%%%%%%%%%%%%%%%%%%%%%%%%%%%%%%%%%%%%%%%%%%%%%%%%%%%%%%%%%%%%%%%%%%%%%%%%%%%%%%%%%%%%%%%%%%%%%%%%%%%%

\section{Co-dimension two braiding} In the first subsection, we will use the tools developed so far to complete the proof of Theorem~\ref{1}.
We ask some questions about co-dimension two braidings in other cases in the second subsection.
We observe that since we have co-dimension $l=2$, then $\ell=\ell_v$  and $\pi=\pi_v$.

%%%%%%%%%%%%%%%%%%%%%%%%%%%%%%%%%%%%%%%%%%%%%%%%%%%%%%%%%%%%%%%%%%%%%%%%%%%%%%%%%%%%%%%%%%%%%%%%%%%%%%%%%%%%%%%%%%%%%%%%%%%%%%%%%%%%%%%%%%%%%%%%%%%%%%%%%%
\subsection{Isotoping co-dimension two link to be positive}
 To prove our main result it remains to show the following.
\begin{thm}\label{1a}
 For $1\leq k\leq 3$ , each embedded closed oriented link $f:M^k\rightarrow\mathbb{R}^{k+2}$ is isotopic to a positive link.
\end{thm}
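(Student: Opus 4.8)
The plan is to isotope $M$ to a positive link by repeatedly applying cellular moves that replace negative simplices with positive ones, inducting on the number of negative simplices; by Theorem~\ref{3} this suffices. After an initial general-position perturbation (a type-(1) isotopy, which preserves the sign of every simplex) I may assume that every $k$-simplex of $M$ is in general position, so that each is unambiguously positive or negative and the $\pi$-images of any two simplices meet in the expected dimension. The goal is then to show that a single negative simplex can always be eliminated.

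First I would fix a negative $k$-simplex $\sigma$ and analyze its crossings. Because the co-dimension is two we have $\pi=\pi_v$, so $\pi(p)=\pi(q)$ forces $p$ and $q$ to differ only in the $v$-direction; since $M$ is embedded, wherever $\pi(p)=\pi(q)$ with $p\neq q$ the height difference $v(p)-v(q)$ is nonzero. On each convex overlap region $\pi(\sigma)\cap\pi(\tau)$ this difference is affine and nonvanishing, hence of constant sign, so each other simplex $\tau$ contributes crossings of a single type to $\sigma$. Consequently the overcrossing locus $\mathcal{C}_+$ and the undercrossing locus $\mathcal{C}_-$ on $\sigma$ are finite unions of convex $(k-1)$-cells, and a dimension count on triple points shows they can overlap only in a set $K=\mathcal{C}_+\cap\mathcal{C}_-$ of dimension $k-2$.

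If $\sigma$ meets only one of $\mathcal{C}_+,\mathcal{C}_-$, i.e.\ it does not carry both overcrossings and undercrossings, then Lemma~\ref{B} supplies a cone point $q$ with $O\in\pi(\mathring{D})$ and $D\cap M=\sigma$ for $D=-(q*\sigma)$, and Lemma~\ref{A} together with Remark~\ref{r3} replaces $\sigma$ by positive simplices, lowering the number of negative simplices. When $\sigma$ carries both types I would instead, as in Remark~\ref{r6}, subdivide $\sigma$ into $k$-cells each meeting at most one of $\mathcal{C}_+,\mathcal{C}_-$ and then cone off each subcell separately. The crux of the whole argument, and the step I expect to be the main obstacle, is proving that such a subdivision exists: one must cut the $k$-cell $\sigma$ along a $(k-1)$-dimensional wall that separates $\mathcal{C}_+$ from $\mathcal{C}_-$ through their shared conflict set $K$.

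For $k=1$ the set $K$ is empty and the over/under crossing points are isolated and distinct, so one merely subdivides the edge between consecutive crossings, recovering the classical argument. For $k=2$ the conflict set is a finite collection of triple points at which over-arcs and under-arcs cross transversally in the triangle, and a short planar argument produces the separating arcs. The genuinely difficult case is $k=3$: here $K$ is a $1$-dimensional curve along which the over-sheets and under-sheets meet inside the tetrahedron $\sigma$, and one must assemble an embedded separating surface containing $K$ from the local separations. This is precisely where the hypothesis $k\le 3$ enters, since the conflict set then has dimension $k-2\le 1$, low enough that the local walls patch into a global one, whereas for $k\ge 4$ the higher-dimensional conflict loci can link and obstruct the separation. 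Once the subdivision is available, each subcell is coned off by Remark~\ref{r6} to a union of positive simplices; as every subcell of the negative $\sigma$ is itself negative and is replaced by positive simplices while no other simplex changes sign, the total number of negative simplices strictly decreases and the induction closes.
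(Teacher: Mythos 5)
There is a genuine gap at what you yourself identify as the crux. Your plan is to subdivide a negative simplex $\sigma$ into $k$-cells each meeting at most one of $\mathcal{C}_+$, $\mathcal{C}_-$ and then cone each cell off via Lemma~\ref{B}/Remark~\ref{r6}. But such a subdivision cannot exist whenever the conflict set $K=\mathcal{C}_+\cap\mathcal{C}_-$ is nonempty: a point of $K$ is a triple point at which $\sigma$ is the \emph{middle} sheet, so that single point is simultaneously an overcrossing and an undercrossing. Since the closed cells of any subdivision cover $\sigma\supseteq K$, some cell contains a point of $K$ and therefore meets both loci, and no choice of ``wall through $K$'' avoids this. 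Moreover Lemma~\ref{B} genuinely fails for such a cell: as the footnote in its proof points out, for points of $M$ projecting into $\pi_v(\sigma)$ the crossing type is independent of the $v$-coordinate of the apex $q$, so when both types are present over the cell no height for $q$ makes the cone miss $M$. So for $k=2,3$ (where $K$ is generically nonempty) your induction step does not go through; only your $k=1$ case, where $K=\emptyset$, is complete.

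The paper resolves exactly this point by a different move, not by separation. Near a middle-sheet triple point $p$ one chooses the apex $q$ so that $O$ lies in the open segment $(\pi(q),\pi(p))$; general position guarantees $[O,\pi(p)]$ meets the projections of the three simplices involved only at $\pi(p)$, so making the $v$-coordinate of $q$ sufficiently large forces $[q,p]\cap M=\{p\}$. Compactness then yields a small $k$-cell $\nu\ni p$ inside $\sigma$ with $(q*\nu)\cap M=\nu$ and $O\in\pi(\mathring{(q*\nu)})$, so Lemma~\ref{A} replaces $\nu$ by positive simplices \emph{despite} $\nu$ carrying both crossing types; the rest of $\sigma$, now free of conflict points, is handled by your separation idea. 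Two further ingredients you omit are also needed to make the induction well-founded: the bookkeeping of \emph{divisions} (subdividing and coning destroys the triangulation, and re-triangulating naively can increase the count of negative simplices), and, for $k=3$, \emph{good} divisions together with the paper's special cases 5 and 6, which treat the real new difficulty in dimension five --- triple point arcs of $\mathscr{T}_\sigma$ running into a vertex or edge of $\sigma$ shared with adjacent simplices, where one must split $\sigma$ along the hyperplane of $\pi(\tau)$ and modify $\tau$ itself (this is where outerness of negative simplices is used). Your heuristic that $k\le 3$ enters because ``higher-dimensional conflict loci can link'' is not the operative reason; the method fails for $k>3$ because one can no longer keep the triple point set off the $(k-2)$-skeleton $\delta M$.
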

\textit{Strategy  of proof}. We will use induction on the number of ``negative $k$-simplices''. If all crossings are of one type then we can
use cellular moves to replace (isotope) a negative $k$-simplex with a number of positive $k$-simplices. Sometimes we will have to break up a negative
$k$-simplex into smaller $k$-simplices (temporarily increasing the number of negative $k$-simplices) and show that we can use cellular moves to replace each of the subsimplices,
thereby reducing the number of negative $k$-simplices.

\textit{Notation}.  Let $S$ be a subset of $M$. We say that a point $x\in S$
is a \textit{double point} of $S$ if $|\pi|_S^{-1}(\pi(x))|\geq 2$, a \textit{triple point} of $S$ if $|\pi|_S^{-1}(\pi(x))|\geq 3$,
a \textit{quadruple point} if $|\pi|_S^{-1}(\pi(x))|\geq 4$, and a \textit{quintuple point} of $S$ if $|\pi|_S^{-1}(\pi(x))|\geq 5$. We call the collection of 
all double (respectively triple) points of a subset $S$ of $M$ the double (respectively triple) point set of $S$ and denote it by 
$\mathcal{D}_S$ (respectively $\mathcal{T}_S$), and we call their closure in $S$  
the double (respectively triple) point complex of $S$ and denote it by $\overline{\mathcal{D}_S}$ (respectively $\overline{\mathcal{T}_S}$).
If $S$ is not mentioned explicitly, it is understood that $S$ is $M$.
For any $k$-simplex $\sigma$ of $M$, let $\mathscr{T}_\sigma$ denote the closure in $\sigma$ of $\sigma\cap \mathcal{T_M}$.

\textit{Figures}. A note on the figures; in the cases $k=1,2$, when we are illustrating special cases of crossings on negative $k$-simplices
we will frequently show both an immersed picture, where we will show all the simplices crossing, and a preimage picture, where we indicate all
the crossing points in the negative $k$-simplex. In the case $k=3$, we can only draw preimage pictures.

\begin{proof}[Proof of Theorem ~\ref{1a}] We argue the 3 cases for $k$ separately.

\textbf{Case: $k=1$} (Alexander, \cite{A}). The proof is by induction on the number of negative $1$-simplices of the triangulation of $M$.

\textit{General Position}. We can ensure the all the crossings are isolated double points, and there are no triple points.

 \textit{Special Case}. If a negative $1$-simplex does not have both overcrossings and undercrossings, then we can use Lemma ~\ref{B}
 to replace the $1$-simplex with positive $1$-simplices.
 
 \begin{figure}[!ht]
  \includegraphics [width=10cm] {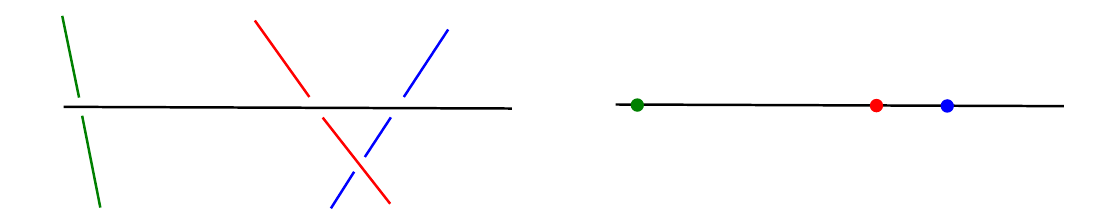} 
  \caption{Immersed Pictures of 1-simplices intersecting. Preimage picture of the black $1$-simplex, where the crossing points are shown.}
 \label{F1}
 \end{figure}
\textit{General Case}. We can break up a negative $1$-simplex into smaller $1$-simplices such that no part has both overcrossings and undercrossings,
and we can apply Lemma ~\ref{B} to each of the subsimplices.
We note that applying such a cellular move to one such subsimplex does not introduce any new crossings on the other subsimplices of our original 
 negative $1$-simplex. Thus we can reduce the number of negative $1$-simplices, and we are done by induction for the case $k=1$.

%%%%%%%%%%%%%%%%%%%%%%%%%%%%%%%%%%%%%%%%%%%%%%%%%%%%%%%%%%%%%%%%%%%%%%%%%%%%%%%%%%%%%%%%%%%%%%%%%%%%%%%%%%%%%%%%%%%%%%%%%%%%%%%%%%%%%%%%%%%%%%%%%%%%%%%%%%  
 \textbf{Digression}. For $k=2,3$ we have to deal with the fact that if we break up a $k$-simplex and apply cellular move to the various parts, the result will not be triangulated
 any more. Of course we could subdivide the adjacent $k$-simplices so that the result is in fact triangulated, but this may increase 
 the number of negative $k$-simplices, which we do not want to happen. We will need to modify the induction hypothesis in cases $k=2,3$.
  For this reason, following Kamada (see Chapter 26 in \cite{K2}), we will introduce the notion of a division of a piecewise linear manifold.
  
 A \emph{division} for a link $M^k\subseteq\mathbb{R}^{k+2}$ is a collection of $k$-simplices $\{\sigma_1,...,\sigma_l\}$ whose union is $M$,
 and such that for distinct $i$ and $j$, if $\sigma_i\cap \sigma_j$ is nonempty, it is contained in faces of both $\sigma_i$ and $\sigma_j$, and is a face of $\sigma_i$
 or $\sigma_j$. 
 % In other words, unlike a simplicial complex $\sigma_i\cap \sigma_j$ need not be a face of both $\sigma_i$ and $\sigma_j$. However, we can  subdivide a division so that it becomes a triangulation (simplical complex).
 We say that the $\sigma_i$'s are $k$-simplices of the division for $M$, and the 
 notion of a positive/negative $k$-simplex is defined as before. We say that a $k$-simplex $\sigma$ is \textit{inner} (respectively \textit{outer}) if its intersection with any other $k$-simplex $\tau$ is a 
 face of $\sigma$ (respectively $\tau$). 
  \begin{figure}[!ht]
  \includegraphics [width=6cm] {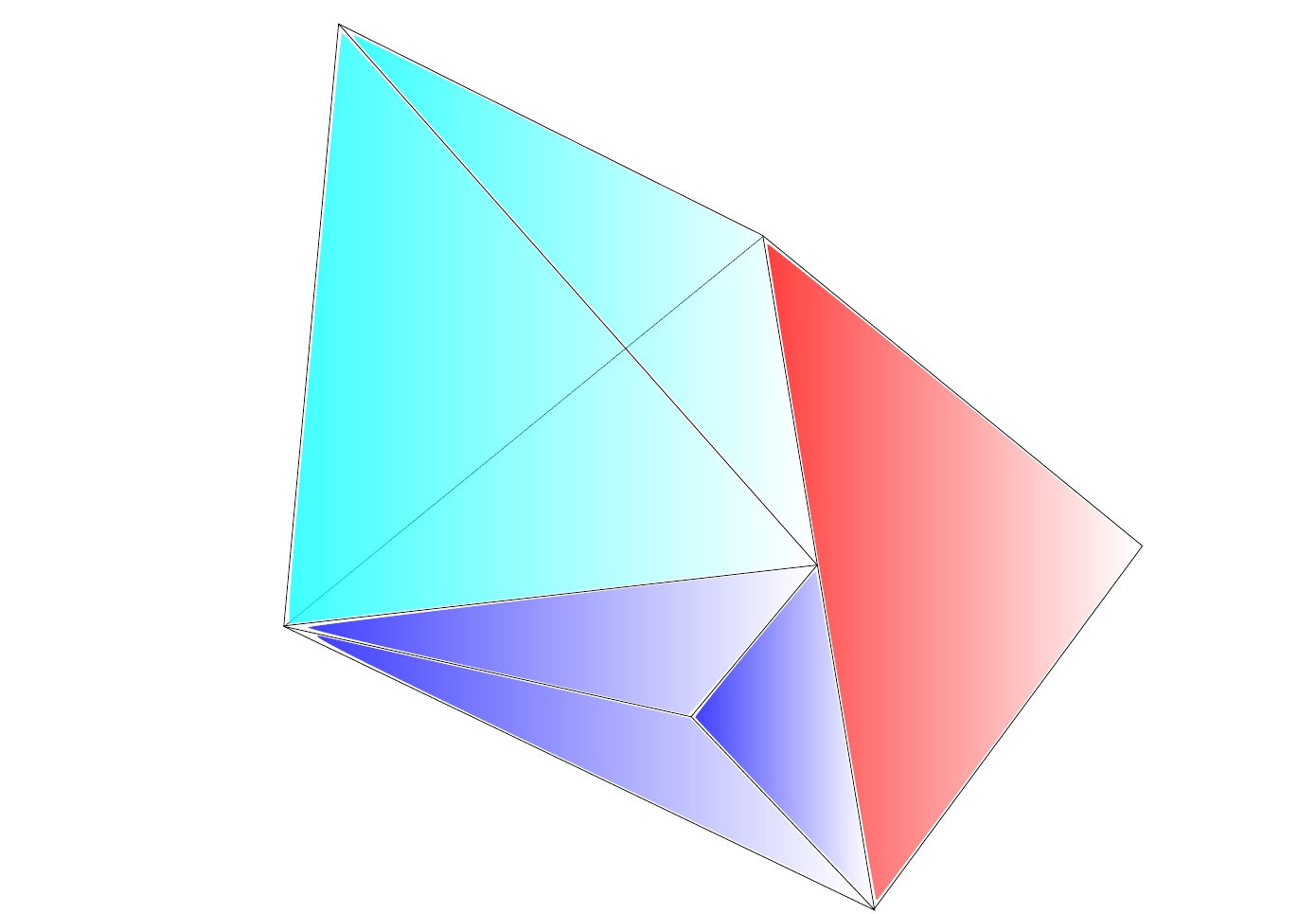} 
  \caption{A part of a division, where the blue simplices are inner, and the red is outer.}
  \label{F2}
\end{figure}
If $\sigma$ is inner, then we can break $\sigma$ up into smaller cells and apply cellular moves on each subcell, and the result would be a division.
We can only move a vertex $x$ of a $k$-simplex $\sigma$ of $M$ slightly without changing the number of $k$-simplices of the division if $x$ is a vertex
of every $k$-simplex $\tau$ that contains $x$. In particular if $\sigma$ is outer, we can slightly move any of its vertices slightly without changing the number of $k$-simplices of the division.
 A division is a triangulation if and only if all $k$-simplices are both inner and outer. We say a division is \textit{good} if all 
the negative $k$-simplices are outer. Lemmas ~\ref{A} and  ~\ref{B} still hold if we have a division of $M$.

\textit{Notation}. For any $k$-simplex $\sigma$ of $M^k$, let $X(\sigma)$ denote the union of all the $k$-simplices which are not adjacent to or equal to $\sigma$,
let $Y(\sigma)$ denote the complement of $\sigma$ in $M$, let $\overline{Y(\sigma)}$ denote the closure of $Y(\sigma)$ in $M$, and let $Z(\sigma)$ denote 
the union of all the $k$-simplices of $M$ whose intersection with $\sigma$ in has dimension at most $k-2$.
 
 We return to the proof of Theorem~\ref{1a}.

%%%%%%%%%%%%%%%%%%%%%%%%%%%%%%%%%%%%%%%%%%%%%%%%%%%%%%%%%%%%%%%%%%%%%%%%%%%%%%%%%%%%%%%%%%%%%%%%%%%%%%%%%%%%%%%%%%%%%%%%%%%%%%%%%%%%%%%%%%%%%%%%%%%%%%%%%%
\textbf{Case: $k=2.$} The proof is by induction on the number of negative $2$-simplices of the division of $M$.

\textit{General Position for the initial triangulation}. 
We may assume that all the crossings are double point lines and isolated triple points in the interior of respective $2$-simplices,
and there are no quadruple points.
\begin{proof}
We will make modifications to $M$ in three steps, at each step assuming results from the previous steps hold. In the first step we can make sure that all $2$-simplices intersect ``nicely'' pairwise, and in the last
 step we will make sure that all triples of $2$-simplices intersect ``nicely'', after which we get the required general position statement. Step 2 is a special case of Step 3, where we make sure all non-adjacent 
 triples of $2$-simplices intersect ``nicely''. By slightly moving each vertex of (the triangulation of) $M$, we may assume 
that:
\begin{enumerate}

 \item The projection of a vertex $x$ of (the initial triangulation of) $M$ is not contained in a hyperplane generated (=affinely spanned) by the projection a 2-simplex $\tau$ of $M$
 , if $x\notin \tau$.
 
If $y_1,y_2,y_3$ are vertices of $M$ such that $\pi(x)$ is contained in the hyperplane $H$ defined by $\pi(y_1),\pi(y_2),\pi(y_2)$
 then that means $\alpha(\pi(x))=0$ where $\alpha$ is the dual (with respect to the standard inner product) linear functional defined by choosing an unit normal to $H$ in $\pi(\mathbb{R}^4)=\mathbb{R}^3$. By a slight perturbation of
 $x$ one can ensure that $\alpha(\pi(x))\neq 0$, and moreover this is generic, i.e.\@ a small perturbation of $x$ would not change the non-vanishing
of $\alpha(\pi(x))$. We can keep on perturbing the vertices slightly until the above condition holds.

 This ensures that the projection of two 2-simplices can only intersect in a line segment, that the projection of two 2-simplices  which only share a vertex cannot
 intersect along any edge of either 2-simplex, that the projection of two 2-simplices  which  share an edge do not intersect elsewhere.
 Consequently, $\overline{\mathcal{D}_M}$ and $\pi(\overline{\mathcal{D}_M})$ are graphs.
 % $\overline{\mathcal{D}_M}$ is a graph with natural CW structure: vertices at points which project to image of co-dimension one faces of simplices, and edges are straight line segments.
 \item For any 2-simplex $\sigma$ of $M$, we have that $\pi(\overline{\mathcal{D}_{X(\sigma)}})$ meets $\pi(\sigma)$ and $\pi(\partial\sigma)$ 
 transversely. 
 
 We will only perturb the vertices of $\sigma$, and so $\pi(\overline{\mathcal{D}_{X(\sigma)}})$ will stay fixed.
 Let $[p_1,p_2]$ be an edge of $\pi(\overline{\mathcal{D}_{X(\sigma)}})$. If we make sure that the points $p_1,p_2$ are not in the hyperplane generated
 by $\pi(\sigma)$, and the projections of vertices $q_1,q_2$ and $q_3$ of $\sigma$ are maximally affinely independent with $p_1,p_2$ 
 (i.e.\@ there is no hyperplane in $\mathbb{R}^3$ containing $\sigma$ and $[p_1,p_2]$, or equivalently $p_2-p_1$, $\pi(q_2)-\pi(q_1)$ and $\pi(q_3)-\pi(q_1)$
 are linearly independent, or equivalently the determinant of $[p_2-p_1| \pi(q_2)-\pi(q_1)|\pi(q_3)-\pi(q_1)]$ is nonzero),
 then $[p,q]$ and $\sigma$ have the required property. If we move the vertices of $\sigma$
   slightly, this property still remains true, hence we can make sure that the required property holds for each edge of $\pi(\overline{\mathcal{D}_{X(\sigma)}})$. 
Slightly perturbing the vertices of $M$ would not change this, and hence we can make sure the property holds for all 2-simplices of $M$. 

 \item For any $2$-simplex $\sigma$ of $M$, we have that $\pi(\overline{\mathcal{D}_{Y(\sigma)}})$ meets $\pi(\sigma)$ and $\pi(\partial\sigma)$
 transversely, except at projection of points of $\partial\sigma$ which are not triple points. 
 
 Given any vertex $x$ of $M$, we can make sure that the set of normal vectors (based at $\pi(x)$) in $\mathbb{R}^3$ to the hyperplane generated by projections of all the 2-simplices that have $x$ has a vertex are maximally affinely independent (i.e.\@ if we think of $\pi(x)$ as the origin, any three of these normal vectors are linearly independent) by perturbing other (i.e.\@ except $x$) vertices of such $2$-simplices. This condition ensures that for any three 2-simplices that share the vertex $x$, their projection can intersect in at most one point.
 We can make sure the above condition holds for all vertices $x$ of $M$. For any 2-simplex
 $\sigma$ of $M$, we can make sure that there is no triple point in any edge of $\sigma$ by perturbing the vertices of $\sigma$ slightly, while fixing the hyperplane generated by the projection of $\sigma$.
 
\end{enumerate} 
Thus, the projection of any triple of 2-simplices intersect at at most one point, and triple points occur in the interior of their respective simplices. 
\end{proof}

\textit{General Position for a division}. When applying a cellular move along $D=-(q*\nu)$, we may assume that
the $q$ and $\nu$ are chosen so that $\pi(\overline{\mathcal{D}_{M\setminus{\mathring{\nu}}}})$ meets  $\pi(\partial D)$ and $\pi(\delta D)$ transversely.
 Consequently, there are no quadruple points, and moreover all triple points are isolated and lie in the interior of their respective 2-simplices.

\textit{Special Cases}. Let us look at some special cases (which will contain previous special cases) of crossings in a negative $2$-simplex:
 \begin{figure}[!ht]
  \includegraphics [width=11cm] {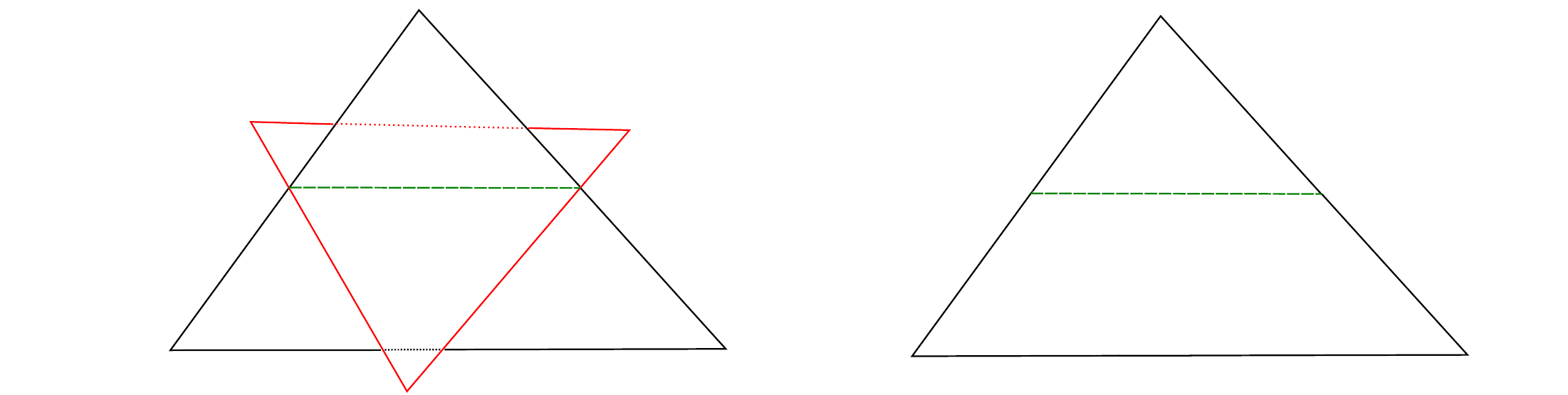} 
  \caption{Immersed and Preimage Pictures: two non adjacent $2$-simplices intersecting in a double point line, illustrating special case 1.}
 \label{F3}
 \end{figure}

 \begin{figure}[htbp]
  \includegraphics [width=10cm] {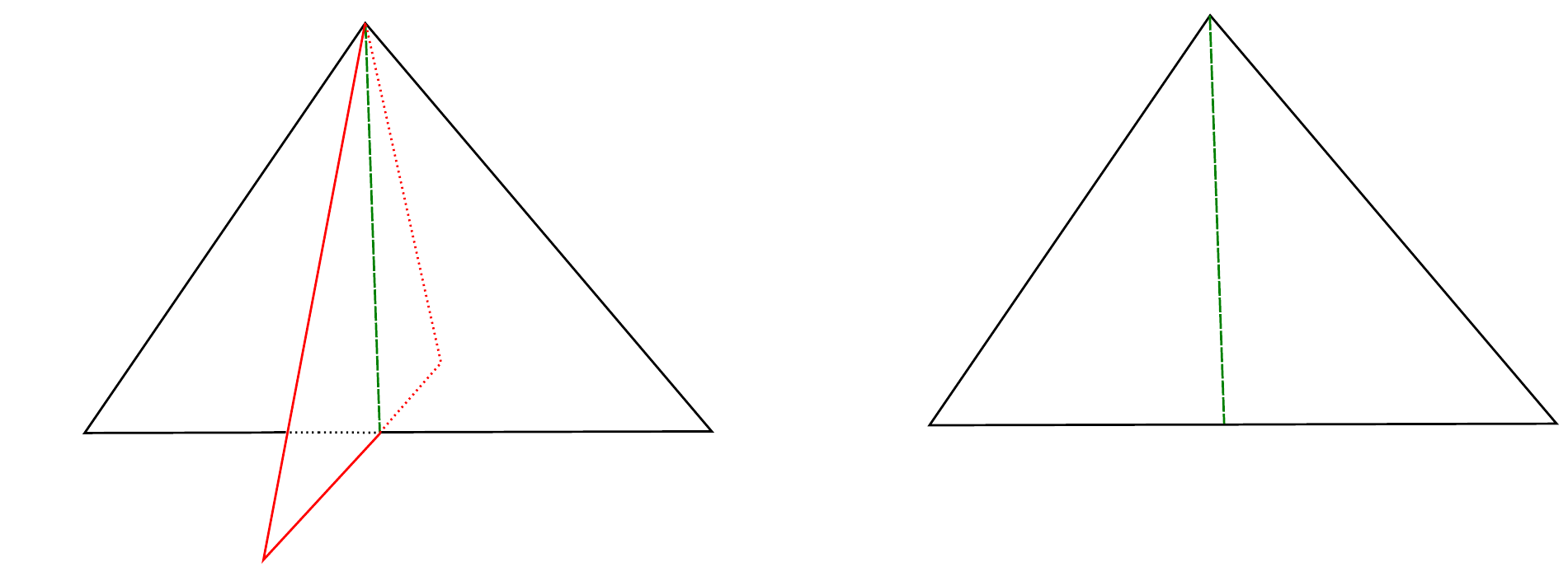} 
  \caption{Immersed and Preimage Pictures: two $2$-simplices sharing a vertex intersecting in a double point line, illustrating special case 1.}
 \label{F4}
 \end{figure}

 \begin{enumerate}
  \item  Suppose $\sigma$ is a negative $2$-simplex such that does not have both overcrossings and undercrossings, see Figures ~\ref{F3} and ~\ref{F4}.
  
  We can replace $\sigma$ with a union of positive $2$-simplices by Lemma ~\ref{B}.

  \item  Suppose $\sigma$ is a negative $2$-simplex such that there are no triple points, see Figure ~\ref{F5}.
  
  \begin{figure}[!ht]
  \includegraphics [width=5cm] {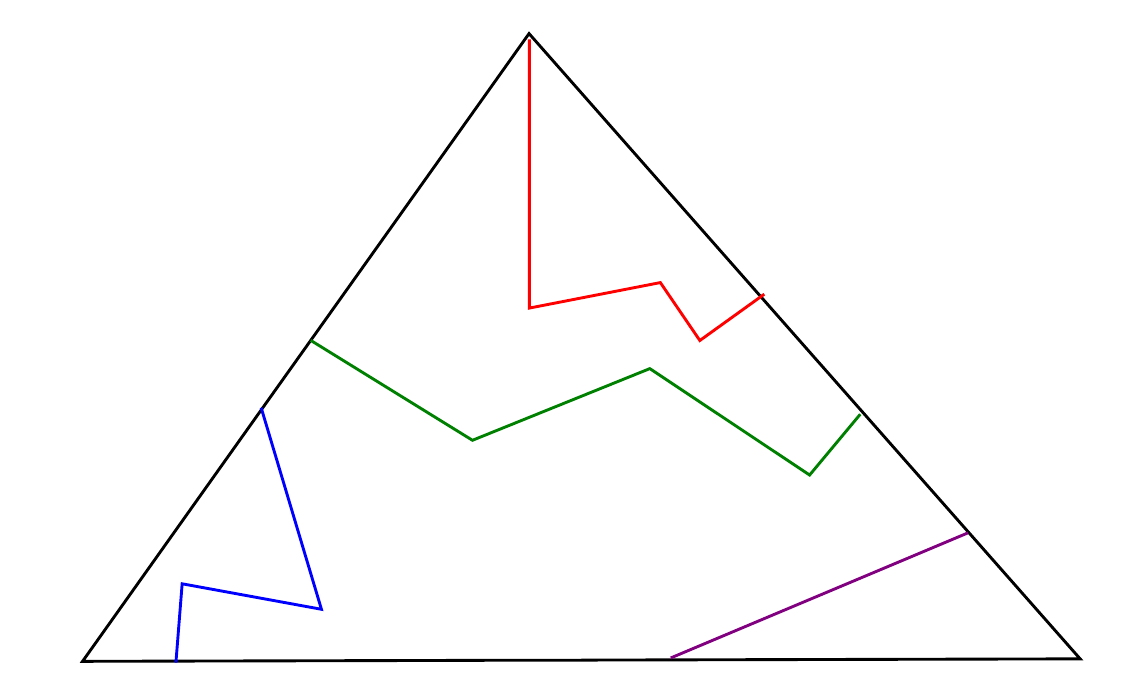} 
  \caption{Preimage Picture: crossings without triple points, illustrating special case 2.}
  \label{F5}
  \end{figure}  
  In this case we can break $\sigma$ up into smaller $2$-simplices which are inner, and moreover each of the subsimplices
  does not have both overcrossings and undercrossings, and we can use Lemma ~\ref{B} to replace each of them with positive $2$-simplices. So we can reduce the number of negative $2$-simplices by one.

  \item Suppose $\sigma$ is a negative $2$-simplex with exactly one triple point $p\in \sigma$, see Figure ~\ref{F6}.
  
  \begin{figure}[htbp]
  \includegraphics [width=11cm] {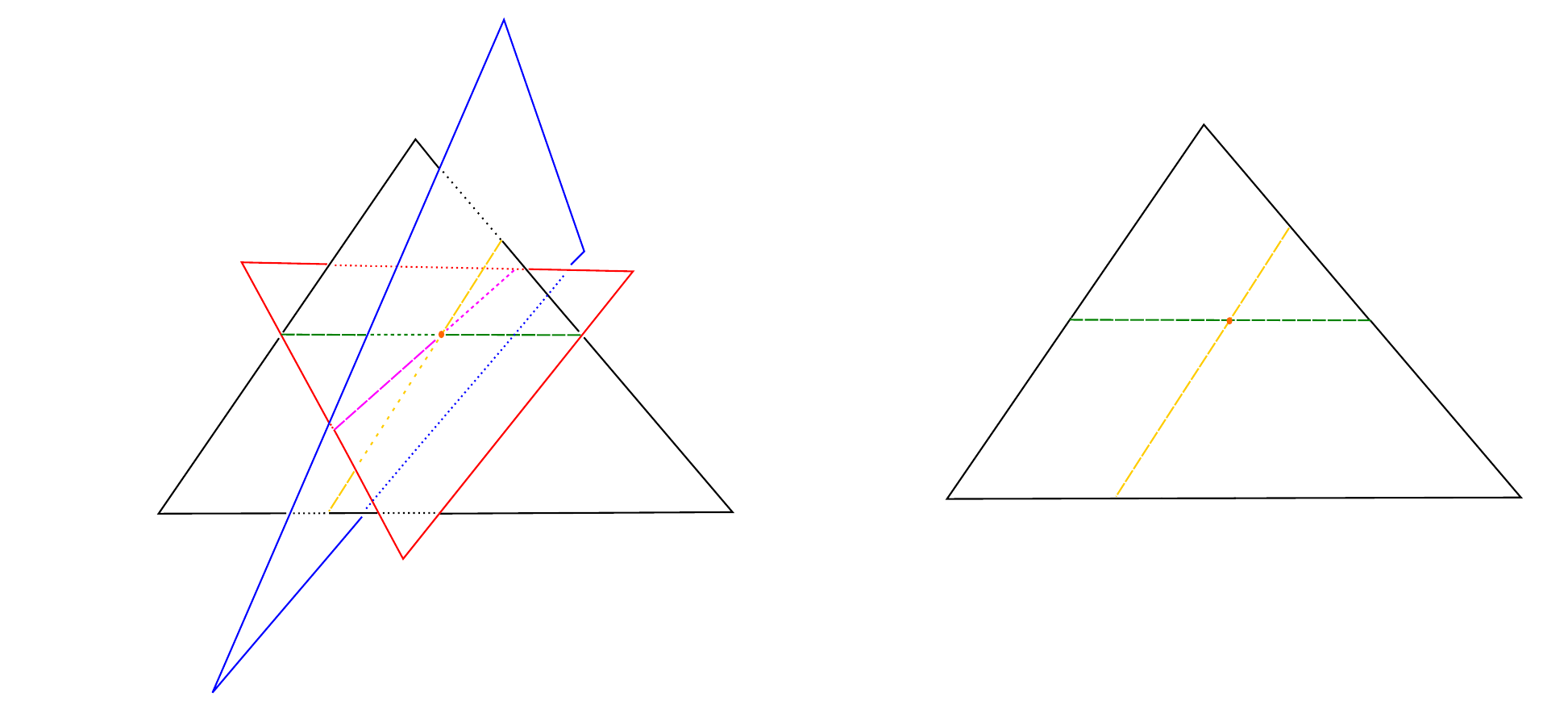} 
  \caption{Immersed and Preimage Pictures: three non adjacent simplices intersecting in an isolated triple point, illustrating special case 3.}
 \label{F6}
 \end{figure}
  We know that the line segment $[O,\pi (p)]$ can meet the projection of each of the three 2-simplices giving rise to the triple point only in $\pi(p)$ (since all the $2$-simplices
  are in general position), and we choose a point $q\in\mathbb{R}^4$ (the $v$-coordinate will be changed later if necessary) such that $O$ is in the line segment $(\pi (q),\pi (p))$.
  As in the proof of Lemma ~\ref{B}, by choosing the $v$-coordinate of $q$ to be sufficiently positive (or negative), we have $[q,p]\cap M= \{p\}$.
  \begin{figure}[htbp]
  \includegraphics [width=11cm] {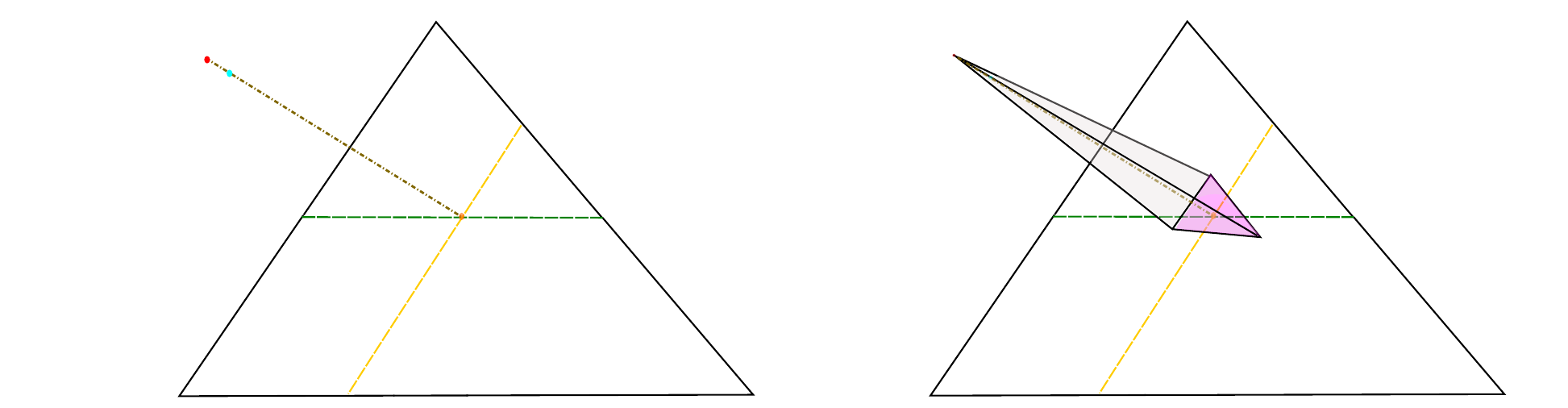} 
 \caption{Replacing a small neighborhood of an isolated triple point by cellular move.}
 \label{F7}
 \end{figure}

   By compactness we have $d([q,p],\overline{Y(\sigma)})>0$, and hence we can choose a small 2-simplex $[p_0,p_1,p_2]$ in $\sigma$
  containing $p$ such that $[q,p_0,p_1,p_2]$ meets $M$ only at $[p_0,p_1,p_2]$, and we can use cellular move to replace $[p_0,p_1,p_2]$ by
  positive $2$-simplices, see Figure ~\ref{F7}. The rest of $\sigma$ can be broken up into smaller inner $2$-simplices each of which does not have both overcrossings and undercrossings,
  and by Lemma ~\ref{B}, we can replace them with positive $2$-simplices and hence we are done.
 \end{enumerate}

\textit{General Case}. Suppose $\sigma$ is any negative $2$-simplex, see Figure ~\ref{F8}.
\begin{figure}[!ht]
  \includegraphics [width=5cm] {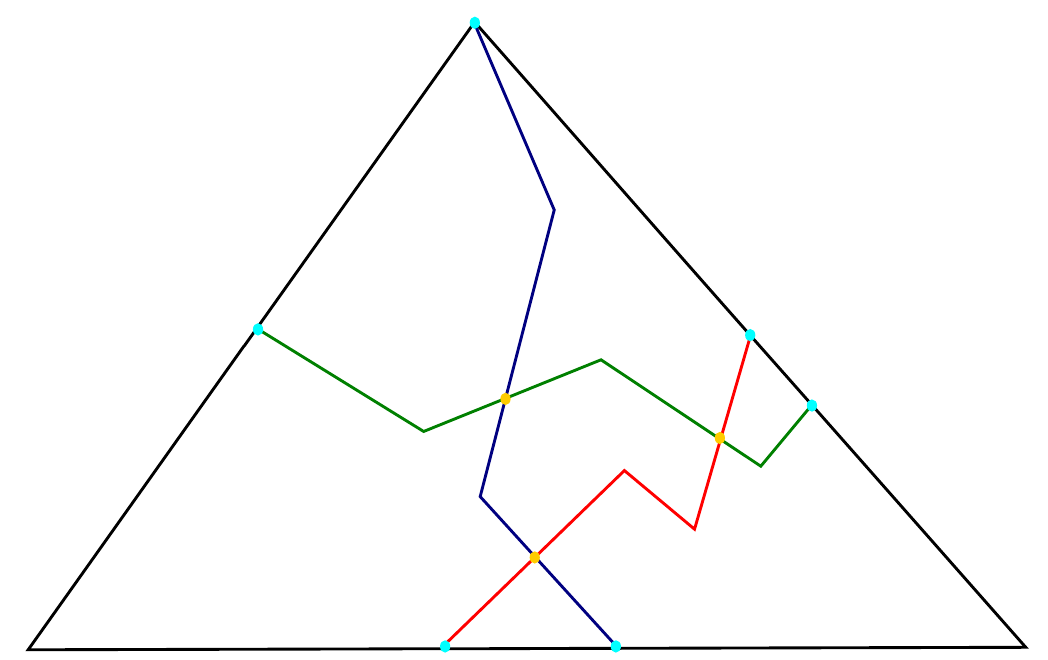} 
  \caption{Preimage Picture: crossings with triple points, illustrating the general case.}
  \label{F8}
  \end{figure}
  
  We break $\sigma$ up into smaller inner $2$-simplices each of which has at most one interior triple point and then
  use the above special cases. Thus we can reduce 
 the number of negative simplices, and we are done by induction for the case $k=2$.
% This gives a 2-dimensional analouge of Birman's \textit{sawtooth}.

%%%%%%%%%%%%%%%%%%%%%%%%%%%%%%%%%%%%%%%%%%%%%%%%%%%%%%%%%%%%%%%%%%%%%%%%%%%%%%%%%%%%%%%%%%%%%%%%%%%%%%%%%%%%%%%%%%%%%%%%%%%%%%%%%%%%%%%%%%%%%%%%%%%%%%%%%%
\textbf{Case: $k=3.$}
It suffices to prove the following claim, since the initial triangulation is a good division.

\begin{claim}
 Every embedded closed oriented link $M^3$ in $\mathbb{R}^5$ with a good division is isotopic to a positive link.
\end{claim}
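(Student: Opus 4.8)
The plan is to induct on the number of negative $3$-simplices of the good division, paralleling the cases $k=1,2$ but with the multiple-point stratification pushed up one dimension. For $M^3\subseteq\mathbb{R}^5$ and $\pi\colon\mathbb{R}^5\to\mathbb{R}^4$, a generic projection makes the double point set a $2$-complex, makes $\mathcal{T}_M$ a graph (so each $\mathscr{T}_\sigma$ is $1$-dimensional), makes the quadruple points isolated and interior to their simplices, and forces there to be no quintuple points, the relevant count being $5\cdot 3-4\cdot 4=-1$. First I would record this general position statement together with the transversality conditions on $\pi(\overline{\mathcal{D}_{X(\sigma)}})$, $\pi(\overline{\mathcal{D}_{Y(\sigma)}})$ and their triple loci needed to apply cellular moves within a division; the proof is the same three-step vertex-perturbation argument already used for $k=2$, now applied one dimension higher, and I would not rewrite it in detail.

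With general position in hand I would dispatch the easy strata first. If a negative $3$-simplex $\sigma$ has no both over- and undercrossings, Lemma~\ref{B} replaces it by positive simplices. If $\sigma$ has no triple points, the double-point surfaces $\{x\in\sigma:\pi(x)\in\pi(\tau)\}$ are pairwise disjoint inside $\sigma$, so a fine enough subdivision of $\sigma$ into inner $3$-simplices has each subsimplex meeting at most one such surface and hence carrying crossings of a single type; Lemma~\ref{B} in the form of Remark~\ref{r6} then replaces each subsimplex by positive simplices, lowering the negative count by one. Next I would isolate quadruple points exactly as the $k=2$ proof isolates an isolated triple point: for a quadruple point $p$, general position gives that $[O,\pi(p)]$ meets the four sheets only at $\pi(p)$, so choosing $q$ with $O\in(\pi(q),\pi(p))$ and $v$-coordinate large makes $[q,p]\cap M=\{p\}$, and by compactness a small $3$-simplex around $p$ is removed by a cellular move, reducing a simplex with finitely many quadruple points to one with none.

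The heart of the argument, and the step I expect to be the main obstacle, is the case where $\sigma$ has triple curves but no quadruple points and the two sheets meeting $\sigma$ along a triple arc cross it with opposite signs. Here no single cone direction works: pushing $q$ up in the $v$-direction clears the sheet below $\sigma$ but runs into the sheet above it, and conversely, so neither Lemma~\ref{B} nor a naive subdivision applies, since every cell meeting the triple arc inherits both crossing types. My plan is to cut $\mathscr{T}_\sigma$ into short arcs and treat each separately: a hyperplane transverse to such an arc meets the configuration in the $k=2$ picture of an isolated triple point, which we already know how to resolve, so the move to perform is a sweep of that planar move along the arc, realized as a cellular move along a thin $3$-dimensional neighbourhood $N'$ of the arc in $\sigma$. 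The existence of a cone point $q$ with $D=-(q*N')$ meeting $M$ only in $N'$ and $O\in\pi(\mathring{D})$ is the technical crux; I would establish it by the general-position-plus-compactness scheme of Lemma~\ref{B}, choosing $\pi(q)$ so that $O$ lies on a segment from $\pi(q)$ to an interior point of $\pi(\gamma)$, then controlling the finitely many extremal intersection points of $\pi_v(N')$ with the other simplices to fix the $v$-coordinate of $q$, thinning $N'$ as needed so the tent threads between the over- and under-sheets. Once each arc is removed this way, the rest of $\sigma$ has no triple points and is handled by the previous case.

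Finally, for a general negative $3$-simplex I would subdivide it into inner $3$-simplices each containing at most one quadruple point and with its triple locus broken into arcs, and apply the special cases above to each piece; throughout I would check that the subdivisions and moves keep the division good, the negative simplices staying outer since $\sigma$ is outer in a good division and is only ever subdivided into inner cells, so that the induction hypothesis applies to a link with strictly fewer negative $3$-simplices. The only genuinely new difficulty beyond the $k=2$ proof is the one-dimensional triple locus with mixed crossings; the quadruple points, being isolated, are handled verbatim as the $k=2$ isolated triple points, and every other stratum is a dimension-shifted copy of the surface case.
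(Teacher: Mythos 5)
Your general position set-up, your treatment of interior triple arcs with mixed crossings (the tent over a thin neighbourhood of the arc, threaded between the over- and under-sheets), and your isolation of quadruple points reproduce the paper's special cases 1 through 4. But there is a genuine gap: you never handle the configurations in which the closure of $\mathscr{T}_\sigma$ meets a vertex or an edge of $\sigma$, which is exactly what happens when the over-sheet $\tau$ and the under-sheet $\eta$ are adjacent to $\sigma$. These configurations cannot be removed by general position: if $\tau$, $\eta$, $\sigma$ share a vertex $p_0$, their projections are three $3$-dimensional polyhedra in $\mathbb{R}^4$ all containing $\pi(p_0)$, and generically they intersect along an arc through $\pi(p_0)$ (the count is $3\cdot 3-2\cdot 4=1$), so a triple arc $(p_0,p_1]$ limiting onto $p_0$ persists under every perturbation. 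Your sweep fails there: near $p_0$ the sheets $\tau$ and $\eta$ pinch down onto $\sigma$, so however much you thin the neighbourhood $N'$ of such an arc, the cone $q*N'$ must pass through the region where the over- and under-sheets come together, and there is no room to thread the tent; cutting the arc short does not help, because the leftover cell at $p_0$ still carries both crossing types. This also refutes your closing assertion that every stratum besides the quadruple points is a dimension-shifted copy of the surface case: for $k=2$, three $2$-simplices sharing a vertex generically project to planes meeting only at that single point ($3\cdot 2-2\cdot 3=0$), so adjacency never creates triple points, whereas for $k=3$ it does --- this is the genuinely new phenomenon in ambient dimension $5$.

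The missing case is precisely where the good division hypothesis must be used, and it is telling that your proposal invokes goodness only for induction bookkeeping (keeping negative simplices outer after subdivision). The paper's special cases 5 and 6 resolve the adjacency configurations with a different move: cut $\sigma$ along the hyperplane generated by $\pi(\tau)$ into $\sigma_1$ (the part on the same side as $O$) and $\sigma_2$, cone off $\sigma_1$ as in Lemma~\ref{B}, and then modify the adjacent sheet $\tau$ itself so that $\sigma_2$ is left with no triple points --- either by perturbing the vertices of $\tau$, which is legitimate exactly because a negative simplex in a good division is outer, or, if $\tau$ is positive, by performing a cellular move on a subsimplex of $\tau$ containing the offending triple points. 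Without some idea of this kind for making the adjacent sheet retreat, the induction cannot be closed, so the proposal as written does not prove the claim.
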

 The proof of the claim is by induction on the number of negative $3$-simplices on the good division.
\begin{remark}
 After we prove the claim, it follows that the result holds for any division, because we can subdivide any division further so that it becomes a good division.
\end{remark}

\textit{General Position}. We may assume that the double point complex is a 2-dimensional CW-complex, the triple point complex is a graph,
all quadruple points are isolated and in the interior of respective 3-simplices, and there are no quintiple points. Moreover we can also assume that all
 triple points are disjoint from $1$-faces of $3$-simplices, and for any vertex $x$ of $\overline{\mathcal{T}_M}$, the set $\pi|_M^{-1}(\pi(x))$ contains exactly one point
 on the 2-faces of $3$-simplices of $M$.

\begin{remark} This can be proved in a similar way we proved the general position statement in the case $k=2$, and we outline an argument below. 
By slightly moving each vertex of (the triangulation of) $M$, we may assume that for the initial triangulation we have:
\begin{enumerate} 
 \item The projection of a vertex $x$ of $M$ is not contained in a hyperplane generated by the projection a 3-simplex $\tau$ of $M$, if $x\notin \tau$. Consequently,
 $\overline{\mathcal{D}_M}$ and $\pi(\overline{\mathcal{D}_M})$ are 2-dimensional CW-complexes.
   
 \item For any $3$-simplex $\sigma$ of $M$, $\pi(\overline{\mathcal{D}_{Y(\sigma)}})$ and the edges of $\pi(\overline{\mathcal{D}_{Y(\sigma)}})$  meets 
 $\pi(\sigma)$ and $\pi(\partial\sigma)$ transversely, except at projection of points of $\partial\sigma$ which are not triple points.
  Hence, $\overline{\mathcal{T}_M}$ and $\pi(\overline{\mathcal{T}_M})$ are graphs, and 
  for any vertex $x$ of $\overline{\mathcal{T}_M}$, the set $\pi|_M^{-1}(\pi(x))$ contains exactly one point on the $2$-faces
  of $3$-simplices of $M$.
  \item For any $3$-simplex $\sigma$ of $M$, $\pi(\overline{\mathcal{T}_{Y(\sigma)}})$
  meets $\pi(\sigma)$ and $\pi(\partial\sigma)$ transversely, except at projection of points of $\partial\sigma$ which are not quadruple points. 
 \end{enumerate}
 
Now we have the required general position statement for the initial triangulation. 
At each step of applying cellular move along $D=-(q*\nu)$, we may assume that
  $\nu$ is chosen so that there are no vertices of $\overline{\mathcal{T}_{M}}$ in the $2$ faces of 
  $\nu$ except the case that such a point is in $\overline{\mathcal{T}_M}\setminus\mathcal{T}_M$,
 $q$ is chosen so that $\pi(\overline{\mathcal{D}_{M\setminus{\mathring{\nu}}}})$ and $\pi(\overline{\mathcal{T}_{M\setminus{\mathring{\nu}}}})$ meets  
 $\pi(\partial D)$ and $\pi(\delta D)$ transversely. 
 We will then have the required general position statement.
 \end{remark}

\textit{Special Cases}. 
We will look at some special cases (which typically will contain previous special cases) of crossings in a negative $3$-simplex: 
 \begin{enumerate}
  
  \item  Suppose $\sigma$ is a negative $3$-simplex such that all crossings are overcrossing (respectively undercrossing).
  
  We can replace $\sigma$ with a union of positive $3$-simplices by Lemma~\ref{B}.
  
  \item  Suppose $\sigma$ is a negative $3$-simplex such that there are no triple points.
  
  In this case we can break $\sigma$ up into smaller inner $3$-simplices such that the crossings are only overcrossing or undercrossing (but not both), and we
  can use Lemma~\ref{B} to replace each of them with positive $3$-simplices. So we can reduce the number of negative $3$-simplices by one.
  
  \item Suppose $\sigma$ is a negative $3$-simplex with exactly one triple point line segment $[p_0, p_1]$ (with $p_0$ and $p_1$ is not in a vertex or edge of $\sigma$)
  coming from $3$-simplices $\tau$ (above $\sigma$) and $\eta$ (below $\sigma$)
  \footnote {Note that if both $\tau$ and $\eta$ are above (respectively below) $\sigma$, then the crossings in $\sigma$ corresponding to $\tau$ and $\eta$ are both 
   undercrossings (respectively overcrossings), and we are in special case 2.}
   which are not adjacent to $\sigma$, and such that $\pi(\tau)\cap \pi(\eta)$ contains the projection of $[p_0, p_1]$ in its interior, 
   and there are no quadruple points in $\sigma$, see Figure ~\ref{F9}.  
  \begin{figure}[!ht]
  \includegraphics [width=6cm] {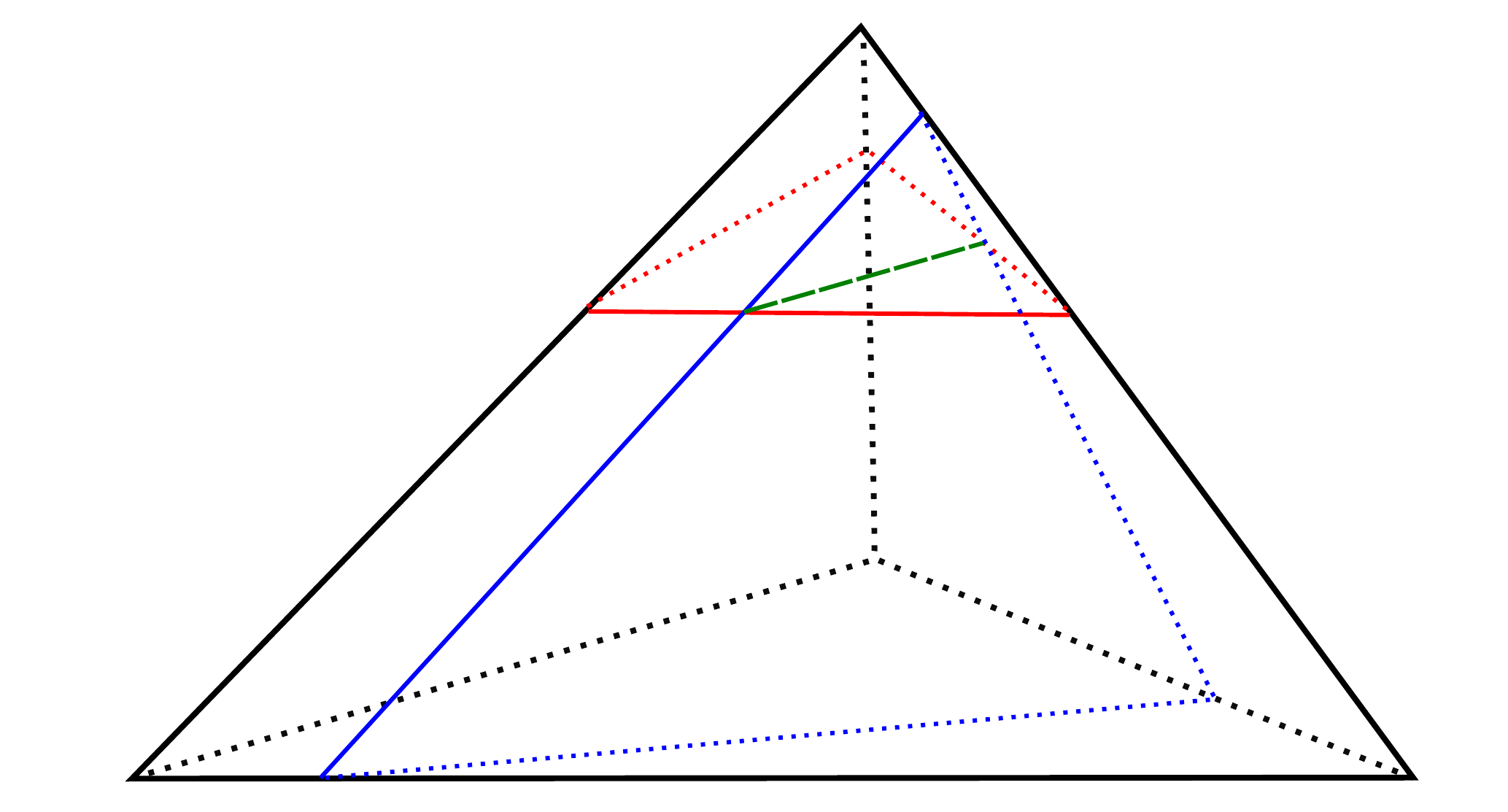} 
  \caption{Preimage Picture: a $3$-simplex intersecting with two non adjacent $3$-simplices in a triple point line segment, illustrating special case 3.}
  \label{F9}
  \end{figure}
  
  Since the $3$-simplices $\tau$ and $\eta$ are in general position the $2$-simplex $[O,\pi(p_0),\pi(p_1)]$ meets $\pi(\tau)$
  and $\pi(\eta)$ only in $[\pi(p_0),\pi(p_1)]$. We choose a point $q\in \mathbb{R}^5$ (the $v$-coordinate will be changed later if necessary) 
   such that $O$ is in the interior of $[\pi(q),\pi(p_0),\pi(p_1)]$,
  and consequently the $2$-simplex $[\pi(q),\pi(p_0),\pi(p_1)]$ meets $\pi(\tau)$ and $\pi(\eta)$ only in $[\pi(p_0),\pi(p_1)]$.
  The hypotheses ensures that in $[\pi(q),\pi(p_0),\pi(p_1)]$ we do not see other (i.e.\@ except $[\pi(p_0),\pi(p_1)]$) lines of over or under crossings starting from the
   vertices $\pi(p_0),\pi(p_1)$.
  Just like in the proof of Lemma~\ref{B}, we may choose the $v$-coordinate of $q$ to be sufficiently positive (or negative) such that:
  \begin{enumerate}  
   \item If $\rho$ is a $3$-simplex whose intersection with $\sigma$ is $2$ dimensional, then the cone $D$ meets $\rho$ only in $\sigma\cap\rho$.
   \item  $[q,p_0,p_1]$ does not intersect $Z(\sigma)$.
 
  \end{enumerate}
  By compactness, the distance between $[q,p_0,p_1]$ and $Z(\sigma)$ is positive, and hence we can choose a
  cell $\nu$ in $\sigma$ containing $[p_0,p_1]$ such that $q*\nu$ meets $M$ only in $\nu$. 
  Using Remark ~\ref{r6}, we can use a cellular move to replace $\nu$ with a finite union of positive $3$-simplices, and can break the rest of $\sigma$ into 
  smaller inner $3$-simplices and use Lemma~\ref{B} to replace them with positive $3$-simplices. So we can reduce the number of negative $3$-simplices by one.
  
  \item Suppose $\sigma$ is a negative $3$-simplex such that $\mathscr{T}_\sigma$ is non empty but does not meet $\sigma$ in a vertex or an edge, see Figure ~\ref{F10}.
  
  \begin{figure}[!ht]
  \includegraphics [width=6cm] {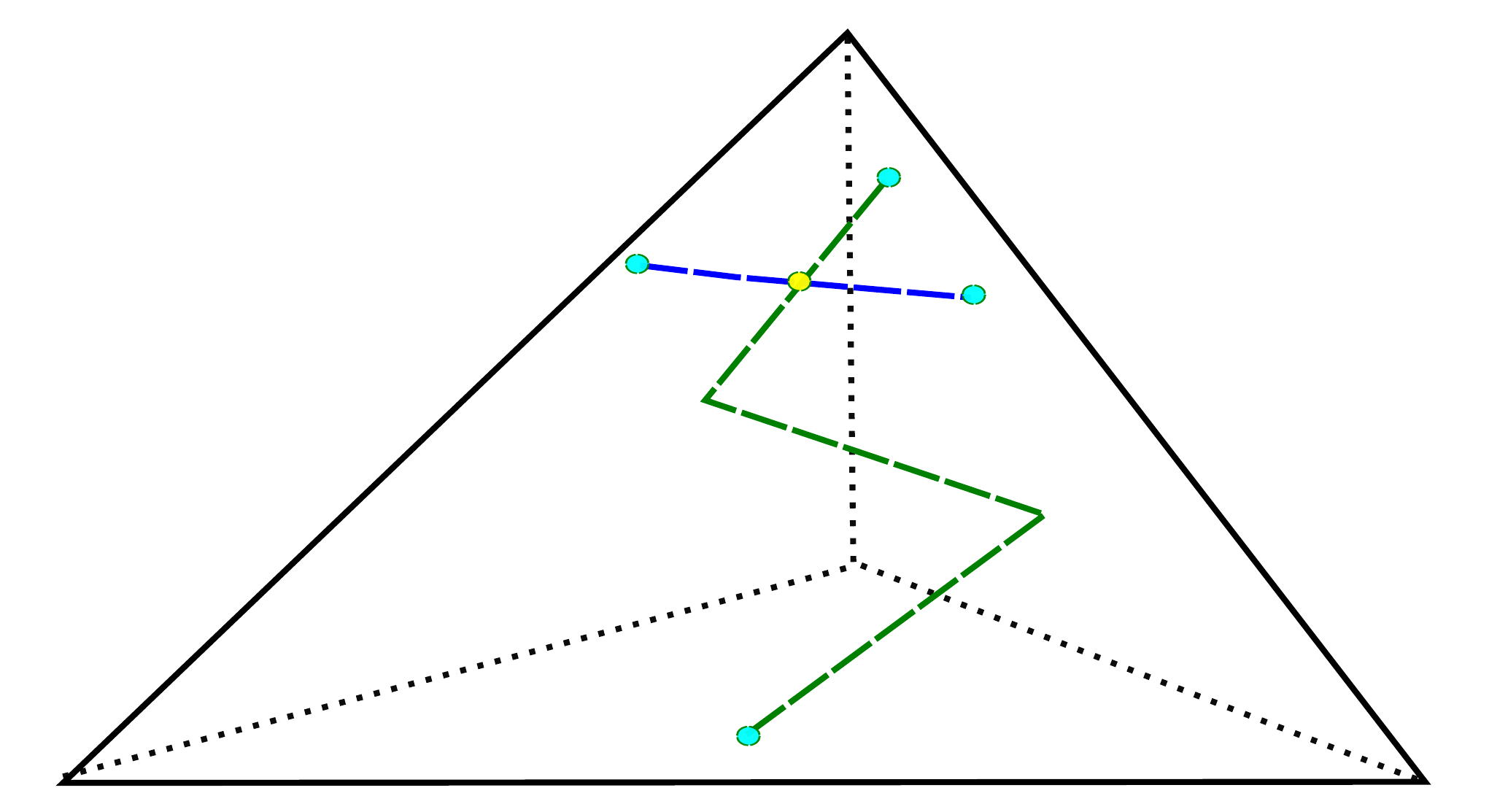} 
  \caption{Preimage Picture: a $3$-simplex with triple point lines meeting at a quadruple point, illustrating special case 4 (double points not indicated).}
  \label{F10}
  \end{figure}
  There will be finitely many points $p_1,...,p_m$ in the interior of $\sigma$ which are either a quadruple point or a vertex of the triple point complex $\overline{\mathcal{T}_M}$.
  We can choose points $q_1,....,q_m$ such that $O\in(\pi(q_i),\pi(p_i))$ and each of these line segments $[q_i,p_i]$ are mutually disjoint. Like before,
  we can find $3$-simplices $P_i$ containing $p_i$ in $\sigma$ such that $q_i*P_i$ are mutually disjoint, and then we can use cellular moves to replace
  each $P_i$ with union of positive $3$-simplices. The rest of $\sigma$ can be broken up into smaller inner $3$-simplices such 
  that we are in the previous special cases. The hypothesis and our general position statement 
% (for any vertex $x$ of $\overline{\mathcal{T}_M}$, the set $\pi|_M^{-1}(\pi(x))$ contains exactly one point on the $2$ faces of simplices of $M$)
 ensures that for all subsimplices which contain triple point line segments, we are in the previous
  special case. As we have seen, we can replace each of these $3$-simplices by positive $3$-simplices, and hence we can reduce the number
  of negative $3$-simplices by one.
  
  The special cases of crossings in negative $3$-simplices we considered so far are analougus to the ones we saw in the case $k=2$.
  In next two special cases we will consider the ``new'' type of crossings, when $\mathscr{T}_\sigma$ meets a vertex or an edge of $\sigma$, and we will
  need a new idea.
  
  \item Suppose $\sigma$ is a negative $3$-simplex, with the only crossings coming from $3$-simplices $\tau$ (above) and $\eta$ (below) who share a vertex $p_0$ with $\sigma$.
  Moreover, there is triple point semiopen line segment $(p_0,p_1]$ in $\sigma$, and $\pi(\tau)\cap \pi(\eta)$ contains $\pi(p_0,p_1]$ in its interior, see Figure ~\ref{F11}.
   \begin{figure}[!ht]
  \includegraphics [width=6cm] {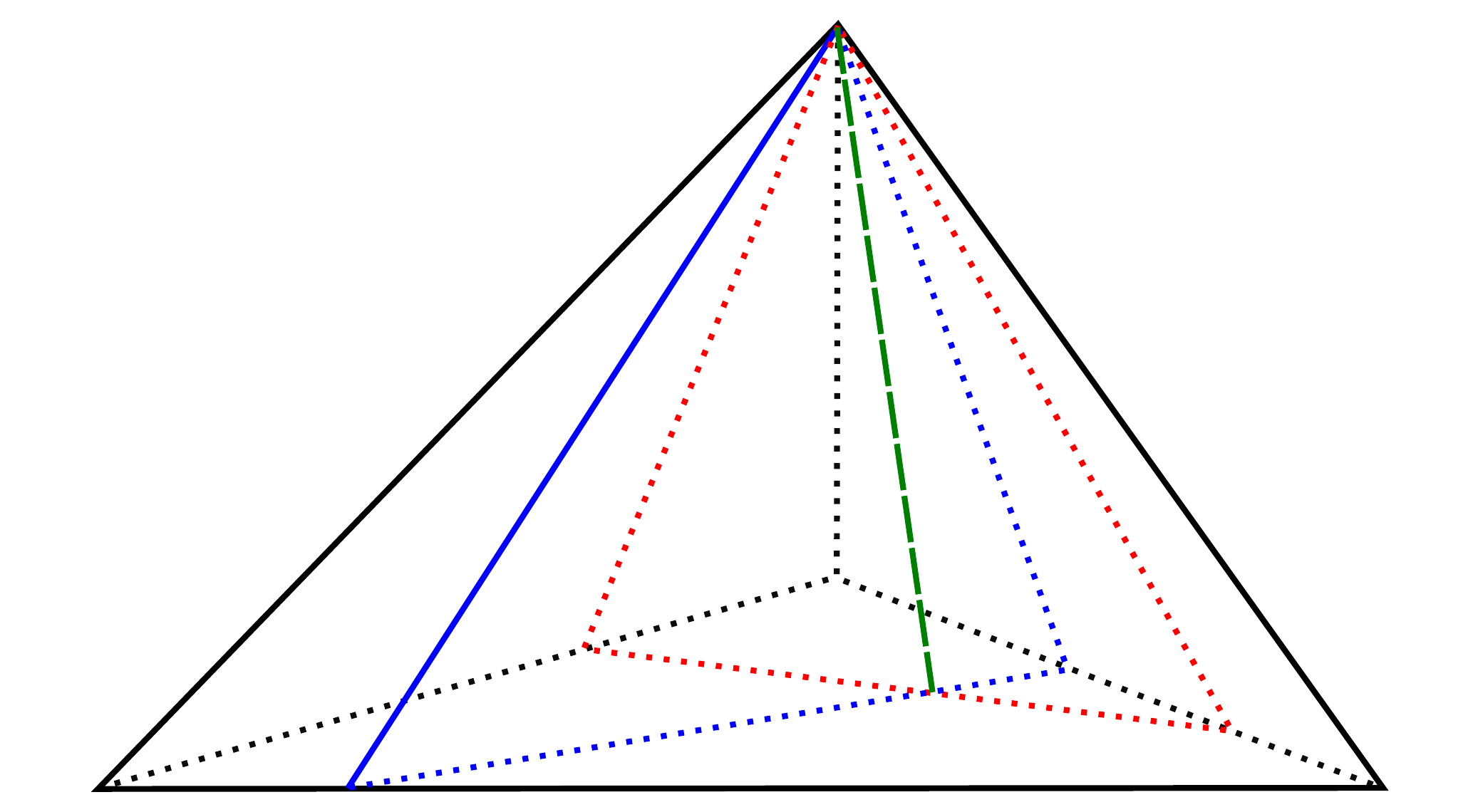} 
  \caption{Preimage Picture: three adjacent $3$-simplices sharing a vertex intersecting in a triple point semiopen line segment, illustrating special case 5.}
  \label{F11}
  \end{figure}
  
  Let $\sigma_1$, $\sigma_2$ be the subcells of $\sigma$ such that the hyperplane generated by $\pi(\tau)$ breaks $\pi(\sigma)$ into the two parts $\pi(\sigma_1)$ and
  $\pi(\sigma_2)$, and let us assume that $\pi(\sigma_1)$ is in the same half-space as $O$.
  As in the proof of Lemma ~\ref{B}, we can choose a point (by making the $v$-coordinate sufficiently positive) $q$ such that the cone $q*\sigma_1$  
  meets $M$ only in $\sigma_1$, and $O$ is in the interior of $\pi(q*\sigma_1)$. We use a cellular move to replace $\sigma_1$ by the other faces of this cone.  
  If $\tau$ is negative, it must be a outer $3$-simplex (since we assumed that the division is good), and hence we can move some of the other (except $p_0$) vertices of $\tau$ a little (so that the projection 
  of the vertices lie in the half-space generated by the old $\pi(\tau)$, containing $O$) so that $\sigma_2$ does not have any triple point.
  If $\tau$ is positive, we can apply a cellular move on a smaller subsimplex (so that all the new $3$-simplices are positive) of $\tau$ containing the triple points, so that $\sigma_2$ 
  does not have any triple point. By using the above special cases, we see that we can replace $\sigma_2$ with a union of positive $3$-simplices, 
  and we have reduced the number of negative $3$-simplices by one.
  
  A similar argument works in the special case:
  \item Suppose $\sigma$ is a negative $3$-simplex, with the only crossings coming from $3$-simplices $\tau$ (above) and $\eta$ (below), where only one of $\tau$ or $\eta$ shares an
  edge with $\sigma$. Moreover, there is triple point semiopen line segment $(p_0,p_1]$ in $\sigma$, where $p_0$ lies in the common edge, and $\pi(\tau)\cap \pi(\eta)$ contains $\pi(p_0,p_1]$ in its interior, see Figure ~\ref{F12}.  
  \end{enumerate}
   \begin{figure}[!ht]
  \includegraphics [width=6cm] {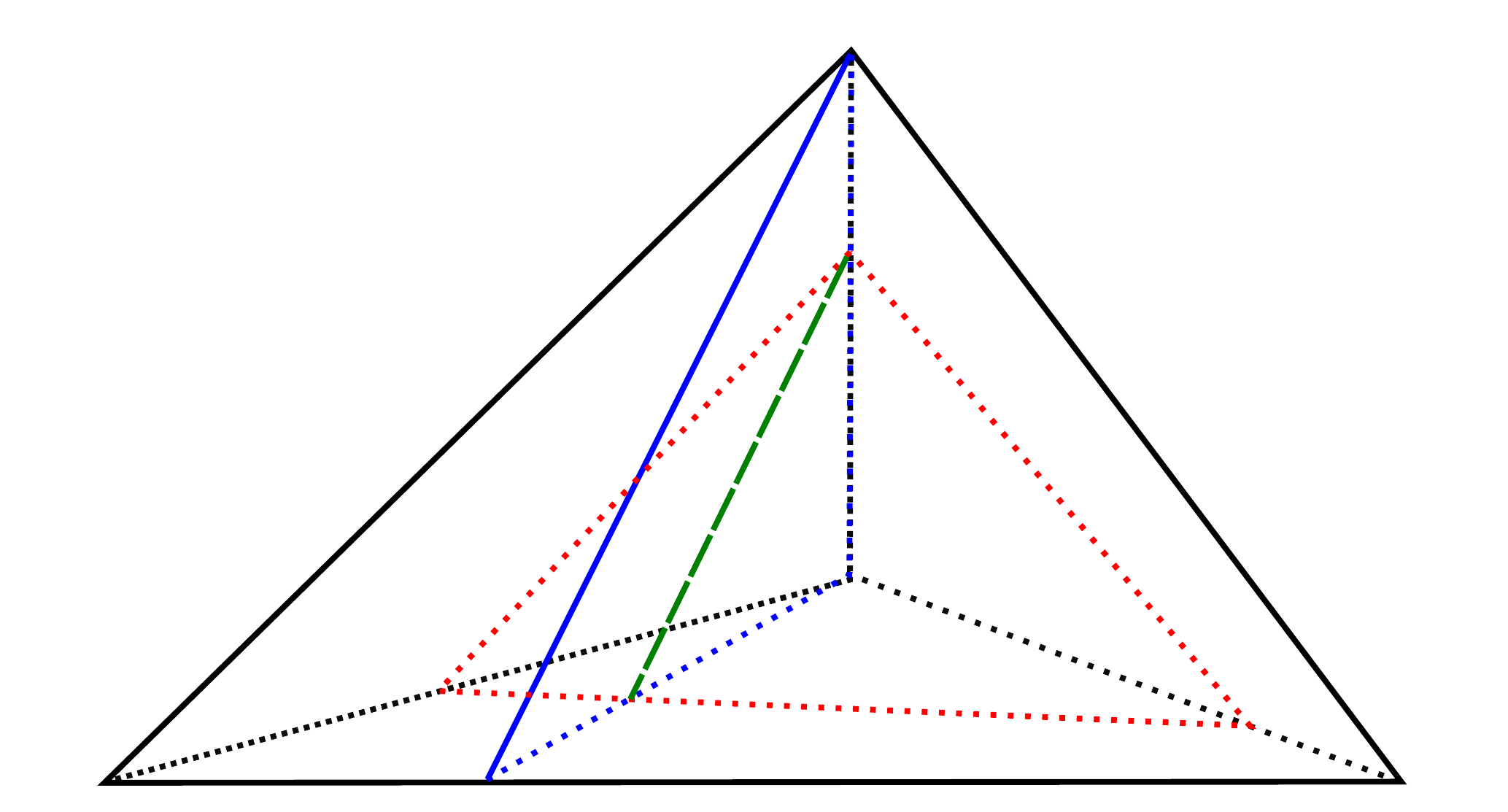} 
  \caption{Preimage Picture: a $3$-simplex  intersecting with a non adjacent $3$-simplex and one sharing an edge in a triple point semiopen line segment, illustrating special case 6.}
  \label{F12}
  \end{figure}
  
 \textit{General Case}. Suppose $\sigma$ is any negative $3$-simplex.
   \begin{figure}[!ht]
  \includegraphics [width=6cm] {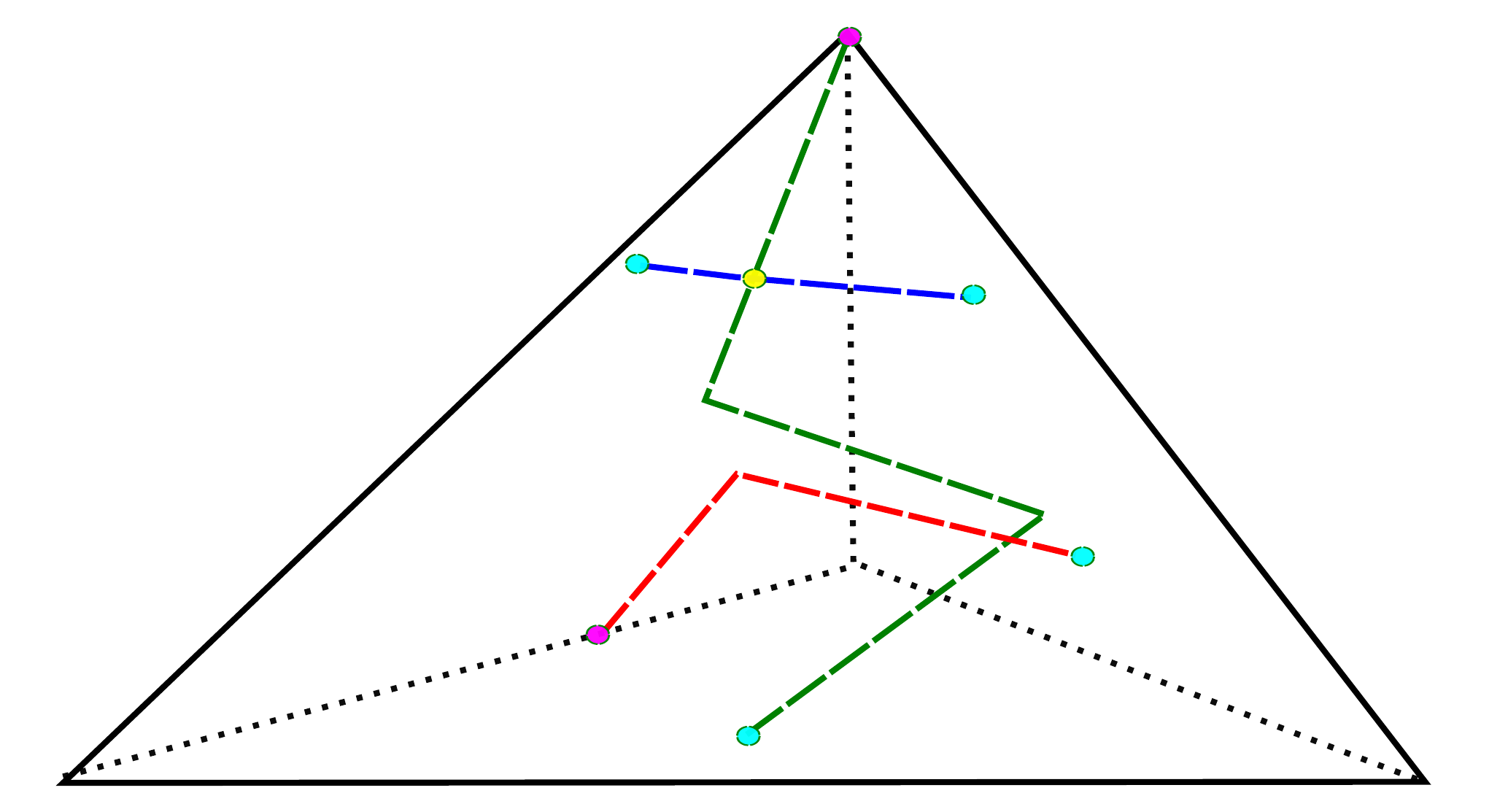} 
  \caption{Preimage Picture: a $3$-simplex with triple points (some of whom converge to a vertex or an edge) and quadruple points, illustrating the general case
  (double points not indicated).}
  \label{F13}
  \end{figure}
  
  Let us first consider all the points where $\mathscr{T}_\sigma$ meets a vertex or an edge of $\sigma$, and by special cases 5 and 6, we can find small inner $3$-simplices
  containing these points where we can apply cellular moves and replace them with positive $3$-simplices.
  We can break the rest $\sigma$ up into small inner $3$-simplices so that we are special case 4,
  and as we have seen, we can replace each of the subsimplices with positive $3$-simplices, thereby reducing the number of negative $3$-simplices by one.
  This completes the proof of Theorem~\ref{1a} for the case $k=3$.
\end{proof}

%%%%%%%%%%%%%%%%%%%%%%%%%%%%%%%%%%%%%%%%%%%%%%%%%%%%%%%%%%%%%%%%%%%%%%%%%%%%%%%%%%%%%%%%%%%%%%%%%%%%%%%%%%%%%%%%%%%%%%%%%%%%%%%%%%%%%%%%%%%%%%%%%%%%%%%%%%
\subsection{Questions}

For $k>3$, if we have an embedded closed oriented $k$-link in $\mathbb{R}^{k+2}$, and we can make sure (at every step of applying cellular moves) that the triple points of $M$ do not intersect $\delta M$
then we can use the above method to isotope the link to be positive. However, we cannot always guarantee such a condition on the triple point set,
and our approach seems to fail if $k>3$ (i.e.\@ ambient dimension $n>5$).
\begin{question}
 Can Theorem~\ref{1} be extended for higher ambient dimension? If not, can a counterexample/obstrustion be found?
\end{question}
 We can restrict our attention to closed braids where the branched set in $M$ is a $(k-2)$-dimensional submanifold.
In our proof, we could only show that every closed oriented 3-link in $\mathbb{R}^5$ can be isotoped to be a closed braid where the branched set is a graph. 
\begin{question}
 Can every closed oriented 3-link in $\mathbb{R}^5$ be isotoped to be a closed braid where the branched set is a link?
 More generally, for which $n\geq 5$, is every closed oriented $(n-2)$-link in $\mathbb{R}^n$ isotopic to a closed braid with the branched set being a submanifold?
\end{question}
We can also ask similar questions in the smooth category.
\begin{question}
 For which $n$, is every closed oriented smooth $(n-2)$-link in $\mathbb{R}^n$ smoothly isotopic to a closed braid (with the branched set being a submanifold)?
\end{question}

%%%%%%%%%%%%%%%%%%%%%%%%%%%%%%%%%%%%%%%%%%%%%%%%%%%%%%%%%%%%%%%%%%%%%%%%%%%%%%%%%%%%%%%%%%%%%%%%%%%%%%%%%%%%%%%%%%%%%%%%%%%%%%%%%%%%%%%%%%%%%%%%%%%%%%%%%%
%%%%%%%%%%%%%%%%%%%%%%%%%%%%%%%%%%%%%%%%%%%%%%%%%%%%%%%%%%%%%%%%%%%%%%%%%%%%%%%%%%%%%%%%%%%%%%%%%%%%%%%%%%%%%%%%%%%%%%%%%%%%%%%%%%%%%%%%%%%%%%%%%%%%%%%%%%

\section{Higher co-dimension braiding}
In the first subsection, we will use the tools developed so far to complete the proof of Theorem~\ref{2}.
We end with some questions about higher co-dimension braidings in the second subsection.
\subsection{Isotoping higher co-dimension link to be positive}
To prove Theorem~\ref{2} it remains to show the following.
\begin{thm}\label{2a}
 Any closed oriented piecewise linear $k$-link  $f:M\rightarrow\mathbb{R}^{k+l}$ can be piecewise linearly isotoped to be a closed braid for $2l\geq k+2$.
\end{thm}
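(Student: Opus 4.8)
The plan is to run exactly the strategy of Theorem~\ref{1a}, now in codimension $l$. By Theorem~\ref{3} it suffices to isotope $M$ to a positive link, and I would induct on the number of negative $k$-simplices, working throughout with divisions (and \emph{good} divisions, with the negative simplices outer) as in the case $k=3$, so that subdividing an inner simplex and coning off its pieces stays inside the inductive framework. The preliminary work is a general position statement for $\pi|_M\colon M\to\mathbb{R}^{k+1}$: after perturbing vertices I would arrange that the $r$-fold point locus has its expected dimension $k-r+1$, that these loci, the skeleton $\delta M$, and the locus where two sheets cross at equal $v$-height (a genuinely new feature once $l>2$, since then $\ell\cap v^{\perp}\neq\{0\}$) all meet transversally, and that these properties can be restored after each cellular move. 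The hypothesis $2l\ge k+2$ I would read as $2(l-1)\ge k$: the fibre dimension $l-1$ of $\pi$ is at least $k/2$, and this is precisely the transverse room I expect to consume in constructing the cones below.

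For a negative $k$-simplex $\sigma$ I would subdivide it into small inner cells. On any cell that does not have both overcrossings and undercrossings, Lemma~\ref{B} together with Remark~\ref{r6} replaces it by positive simplices by coning far in the $v$-direction, with no threading required. The remaining cells are small neighbourhoods $\nu$ of points of the transition loci, where over- and undercrossings meet; on these, as in special cases 3--6 of Theorem~\ref{1a}, I would seek a cone $D=-(q*\nu)$ with $O\in\pi(\mathring D)$ and $D\cap M=\nu$ and then apply Remark~\ref{r6}. The essential local simplification is that near a point of an $r$-fold stratum only finitely many sheets of $M$ are relevant, so I must thread the $(k+1)$-dimensional cone $D$ past these few sheets rather than past all of $M$.

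The place where the codimension is actually spent is the freedom in lifting the cone point. Once $\pi(q)$ is fixed so that $O\in\pi(\mathring D)$, the lift of $q$ may be taken anywhere in the $(l-1)$-dimensional fibre $\pi^{-1}(\pi(q))$, and moving $q$ within this fibre slides $D$ transversally off the nearby sheets. The multiple point strata, their intersections with $\delta M$, and their further intersections with the lower skeleta form a descending chain of loci of decreasing dimension, and each stage of the special-case construction costs one transverse direction in the fibre; the budget $l-1$ runs out exactly when $2(l-1)\ge k$ fails, which is why the result is stated for $2l\ge k+2$. Each negative simplex is thereby replaced by positive ones, the count drops, and the induction closes.

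The hard part, I expect, will be precisely the construction of these cellular moves where a multiple point stratum runs into $\delta M$, and recursively into the lower-dimensional skeleta — this is exactly the configuration that defeats the codimension-two argument once $k>3$. Concretely, I would have to show that after fixing $\pi(q)$ the $(l-1)$-dimensional family of lifts genuinely contains a $q$ with $D\cap M=\nu$, uniformly over the finitely many cells $\nu$ of the subdivision, and then verify that all the auxiliary general position properties can be re-established after each move, so that the inductive hypothesis is really available at the next stage. Making the dimension count above rigorous at every stratum of the chain, rather than merely for the top stratum, is the crux on which the whole argument rests.
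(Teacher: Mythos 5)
Your framework (reduce to positivity via Theorem~\ref{3}, induct on the number of negative $k$-simplices, work with divisions, and perform cellular moves via Lemma~\ref{B} and Remark~\ref{r6}) is the same as the paper's, but you have missed the one idea that makes the theorem true at this level of generality, and as a result you leave open precisely the step that the hypothesis is designed to eliminate. The correct reading of $2l\geq k+2$ is not a ``budget of $l-1$ fibre directions to be spent stratum by stratum''; it is a statement about the dimension of the triple point set. In general position the $r$-fold point locus of $\pi_v|_M$ has dimension $k-(r-1)(l-1)$ (your formula $k-r+1$ is the codimension-two case $l=2$ only). For $r=3$ this is $k-2l+2$, which is $\leq 0$ exactly when $2l\geq k+2$, and for $r=4$ it is strictly negative (recall $l\geq 3$ here, the case $l=2$ forcing $k\leq 2$ and being covered by Theorem~\ref{1a}). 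So under the hypothesis one can arrange, both for the initial triangulation and after each cellular move, that all triple points are isolated and lie in the interiors of their $k$-simplices, and that there are no quadruple points at all. Consequently the configurations you single out as ``the crux'' --- multiple point strata running into $\delta M$ and recursively into lower skeleta, the analogues of special cases 5 and 6 of the $k=3$ argument --- simply do not occur generically, and no new threading mechanism, and no good divisions, are needed.

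With that observation the proof is just the $k=2$, $l=2$ argument repeated: inside a negative simplex $\sigma$, cone off small inner $k$-simplices around the finitely many isolated interior triple points (choosing the segments $[q_i,p_i]$ mutually disjoint and disjoint from the rest of $M$, as in special case 3 and Remark~\ref{r6}), break the rest of $\sigma$ into inner cells whose crossings are of only one type, and finish with Lemma~\ref{B}. Your proposal, by contrast, ends by conceding that the existence of suitable lifts $q$ near strata meeting $\delta M$ is ``the crux on which the whole argument rests,'' i.e.\@ the proof is not completed, and the mechanism you offer for completing it (one transverse fibre direction consumed per stage of a descending chain of strata) is neither made precise nor actually required. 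Extract the triple-point dimension count $k-2l+2\leq 0$ from the hypothesis, and the rest of your outline collapses into the paper's proof.
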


\begin{remark}\label{r1a}
 In case $k=1$ or $2$, then $l=2$ satisfies the hypothesis of the above theorem, and we know in this case the result follows from Theorem~\ref{1a}. 
 In the rest of the section, we will assume that $l\geq 3$. We will also assume that $l\leq k+1$, since otherwise\footnote{The proof given still holds
 if $l>k+1$, one just has to interpret statements (like negative dimensional space) correctly.} it is easy to see that the Theorem holds, as there will be no crossings in the projection under $\pi_v$.
 In fact one can show if $l>k+1$, then any two embeddings of $M^k$ in $\mathbb{R}^{k+l}$ are isotopic (see \cite[Corollary~5.9]{RS}) .
\end{remark}

The proof will be similar to the proof of case $k=2$ of Theorem~\ref{1a}, and we will not discuss special cases of negative simplices this time.
\begin{proof}
 The proof will be by induction on the number of negative simplices in the division of $M$. 
Let us consider the projection under $\pi_v:\mathbb{R}^{k+l}\rightarrow\mathbb{R}^{k+l-1}$, and see what we can say about the crossings under the 
given hypothesis $2l\geq k+2$.

 For any two simplices $\sigma$ and $\tau$, we may assume that $\pi_v(\sigma)$ and $\pi_v(\tau)$ intersect transversely in 
$\pi_v(\mathbb{R}^{k+l})=\mathbb{R}^{k+l-1}$, and in that case the intersection of the affine subspaces generated by them have dimension $2k-(k+l-1)=k-l+1$.
 We may assume that for any triple of simplices $\tau$, $\sigma$ and $\nu$, that $\pi_v(\sigma\cap\tau)$ intersects $\pi_v(\sigma\cap\nu)$ transversely in
 $\pi_v(\sigma)$, and in that case the intersection has dimension $k-2(k-l+1)=k-2l+2\leq 0$. Consequently, all triple points are isolated and can be assumed
 to be in the interior of their respective simplices. 
 
\textit{General Position for the initial triangulation}. We may assume that all the crossings are double point complex is $(k-l+1)$-dimensional CW-complex, all triple points are isolated and in the interior of respective
$k$-simplices, and there are no quadruple points.

\textit{General Position for a division}. When applying a cellular move along $D=-(q*\nu)$, we may assume that
the $q$ and $\nu$ are chosen so that $\pi_v(\overline{\mathcal{D}_{M\setminus{\mathring{\nu}}}})$ meets  $\pi_v(\partial D)$ and $\pi_v(\delta D)$ transversely.
 Consequently, there are no quadruple points, and moreover all triple points are isolated and lie in the interior of their respective $k$-simplices.

 Now given any negative simplex $\sigma$, it will contain finitely many triple points $p_1,...,p_m$ in its interior.
  We can choose points $q_1,....,q_m$ such that $O\in(\pi_v(q_i),\pi_v(p_i))$ and each of these line segments $[q_i,p_i]$ are mutually disjoint, and do not 
  intersect the rest of $M$. Using Remark~\ref{r6} we can find $k$-simplices $P_i$ containing $p_i$ in $\sigma$ such that $q_i*P_i$ are mutually disjoint, and then we can use cellular moves to replace
  each $P_i$ by a union of positive $k$-simplices. The rest of $\sigma$ can be broken up into smaller inner $k$-simplices such that there are only crossings
  of one type, and then by Lemmas~\ref{A} and ~\ref{B}, we can replace $\sigma$ with a union of positive simplices. We have reduced the number of negative
  simplices by one, and hence we are done by induction.
\end{proof}

%%%%%%%%%%%%%%%%%%%%%%%%%%%%%%%%%%%%%%%%%%%%%%%%%%%%%%%%%%%%%%%%%%%%%%%%%%%%%%%%%%%%%%%%%%%%%%%%%%%%%%%%%%%%%%%%%%%%%%%%%%%%%%%%%%%%%%%%%%%%%%%%%%%%%%%%%%
\subsection{Questions}

 We have shown that in the piecewise linear category that the answer to Question~\ref{1} is affirmative if $n\geq 2k$ for $k\geq 2$, and also if $n=k+2$ if $1\leq k\leq 3$ .
\begin{question} (in both piecewise linear and smooth categories)
 Given a natural number $k$, is there a natural number $n$ such that the answer to Question~\ref{1} is affirmative, and if so the what is the smallest such $n$?
\end{question}
We can also focus on a given manifold and ask:
\begin{question}
 Given a smooth closed oriented $k$-manifold $M$, is there a smooth branched cover over the sphere $S^k$?
\end{question}
We know that the answer is yes to the corresponding question in the piecewise linear category, due to Alexander \cite{A0} (see also Remark~\ref{r5}).

\begin{question} (in both piecewise linear and smooth categories)
 Given a closed oriented $k$-manifold $M$ which is a branched cover over the $k$-sphere, what is the minimum\footnote{In the smooth category, there will always be a braided embedding of $M$ to some $\mathbb{R}^N$ if we know there is
 a smooth branched cover $g:M\rightarrow S^k$. We know there is a smooth embedding $i:M\rightarrow\mathbb{R}^{2k}$. The map $f=g\times i:M\rightarrow N(S^k)\subseteq \mathbb{R}^{3k}$ is a braided embedding.} $n$ such that there is a braided embedding of $M$ in $\mathbb{R}^{n}$. Is there an $M$ such that this $n$ is
 larger than the smallest dimensional Euclidean space into which $M$ embeds?
\end{question}
It is very likely that in some cases the condition $2l\geq k+2$ we had in Theorem~\ref{2a} can be weakened and still any embedding can be braided.
\begin{question} (in both piecewise linear and smooth categories)
 Given a closed oriented $k$-manifold $M$, what is the range for $l\geq 2$ such that $M$ embeds in $\mathbb{R}^{k+l}$, and every embedding is isotopic to a closed braid?
 If $l$ is not in that range, can we find a counterexample/obstrustion?
\end{question}

%%%%%%%%%%%%%%%%%%%%%%%%%%%%%%%%%%%%%%%%%%%%%%%%%%%%%%%%%%%%%%%%%%%%%%%%%%%%%%%%%%%%%%%%%%%%%%%%%%%%%%%%%%%%%%%%%%%%%%%%%%%%%%%%%%%%%%%%%%%%%%%%%%%%%%%%%%
%%%%%%%%%%%%%%%%%%%%%%%%%%%%%%%%%%%%%%%%%%%%%%%%%%%%%%%%%%%%%%%%%%%%%%%%%%%%%%%%%%%%%%%%%%%%%%%%%%%%%%%%%%%%%%%%%%%%%%%%%%%%%%%%%%%%%%%%%%%%%%%%%%%%%%%%%%


\begin{thebibliography}{11}
 \bibitem{A0}
 J.W. Alexander.
 \newblock Note on Riemann spaces.
 \newblock {\em Bull. Amer. Math. Soc. }, 26:370–372, 1920.

 
 \bibitem{A}
 J.W. Alexander.
 \newblock A lemma on systems of knotted curves.
 \newblock {\em Proc. Nat. Acad. Sci. USA}, 9:93--95, 1923.

 \bibitem{B} J.S. Birman.
 \newblock Braids, Links, and Mapping Class Groups.
 \newblock {\em Annals of Mathematics Studies}, Princeton University Press, 1974.
 
  \bibitem{CK} J.S. Carter and S. Kamada. 
 \newblock How to Fold a Manifold.
 \newblock {\em ArXiv e-prints}, January 2013.

 \bibitem{EF} J.B. Etnyre and R. Furukawa.
 Braided Embeddings of Contact 3–manifolds
 in the standard Contact 5–sphere,
 \newblock {\em ArXiv e-prints}, October 2015.
 
 \bibitem{HLM} H.M. Hilden, M.T. Lozano and J.M. Montesinos.
 All three-manifolds are pull-backs of a branched covering $S^3$ to $S^3$,
 \newblock {\em Trans. Amer. Math. Soc.}, 279(2):729–735, 1983.
 
 \bibitem{J} V.F.R. Jones.
 \newblock Hecke algebra representation of braid groups and link polynomials.
 \newblock {\em Annals of Mathematics}, 126:335-388, 1987.
 
 \bibitem{K1} S. Kamada.
 \newblock  A characterization of groups of closed orientable surfaces 4-space.
 \newblock {\em Topology}, 33:113--122, 1994.

 \bibitem{K2} S. Kamada.
 \newblock  Braid and Knot Theory in Dimension Four.
 \newblock {\em American Mathematical Society}, 2002.
 
 \bibitem{RS} C.P. Rourke and B.J. Sanderson.
 \newblock  Introduction to Piecewise Linear Topology.
 \newblock {\em Springer-Verlag}, 1972.
 
  \bibitem{R} L. Rudolph.
 Braided surfaces and Seifert ribbons for closed braids.
 \newblock {\em Comment. Math. Helv.}, 58(1):1–37, 1983.

 \bibitem{W} H. Whitney.
 The Self-Intersections of a Smooth $n$-Manifold in $2n$-Space.
 \newblock {\em Annals of Mathematics}, 45(2):220–246, 1944.
 
 \end{thebibliography}
\end{document}